\newcommand{\QQ}
{\mathbb{Q}} 
\numberwithin{equation}{section}
\theoremstyle{definition}
\newtheorem{Thm}{Theorem}
\newtheorem{Prop}[equation]{Proposition}
\newtheorem{Cor}[equation]{Corollary}
\newtheorem{Lem}[equation]{Lemma}
\newtheorem{Rmk}[equation]{Remark}
\def\imod#1{\allowbreak\mkern5mu{\operator@font mod}\,\,#1}
\definecolor{blue}{rgb}{0,0,1}
\definecolor{red}{rgb}{1,0,0}
\definecolor{green}{rgb}{0,.6,.2}
\definecolor{purple}{rgb}{1,0,1}
\long\def\red#1\endred{{\color{red}#1}}
\long\def\blue#1\endblue{{\color{blue}#1}}
\long\def\purple#1\endpurple{{\color{purple}#1}}
\long\def\green#1\endgreen{{\color{green}#1}}
\begin{document}

\title[]{Explicit Hecke eigenform product identities for Hilbert modular forms}
\author{Zeping Hao and Chao Qin and Yang Zhou$^{\star}$}
\address{Hua Loo-Keng Center for Mathematical Sciences, Academy of Mathematics and System Sciences, Chinese Academy of Sciences, 100190 Beijing, P.R.China}
\email{zepinghao@amss.ac.cn}
\address{College of Mathematics Science,
Harbin Engineering University,
150001 Harbin, P.R.China}
\email{qinchao@hrbeu.edu.cn}
\address{College of Mathematics Science,
Harbin Engineering University,
150001 Harbin, P.R.China}
\email{yang.zhou@hrbeu.edu.cn}
\date{}
\subjclass[2020]{Primary: 11F41, 11F30.}
\thanks{$^{\star}$ corresponding author}

\keywords{Hecke eigenform, Hilbert modular form, Product identity.}

\begin{abstract}
Let $F$ be a totally real number field, and $g,f,h$ be Hilbert modular forms over $F$ that are Hecke eigenforms satisfying $g=f\cdot h$. Under the grand Riemann hypothesis, we characterize such product identities among all real  quadratic fields of narrow class number one, proving they occur only for $F=\mathbb Q(\sqrt{5})$, with precisely two such identities. We also shed some light on the general totally real case by showing that no such identity exists when both $f$ and $h$ are Eisenstein series of distinct weights. 
\end{abstract}
\maketitle
\newcommand{\Z}{{\mathbb Z}} 
\newcommand{\Q}{{\mathbb Q}} 
\newcommand{\R}{{\mathbb R}}
\newcommand{\C}{{\mathbb C}}
\newcommand{\N}{{\mathbb N}}
\newcommand{\h}{{\mathbb H}}
\newcommand{\SR}{{\mathrm{SL}_2(\R)}}

\section{Introduction and Statement of the Main Theorem}
Nearly three decades ago,  William Duke proposed a question: When is the product of two Hecke eigenforms an eigenform? Duke \cite{Duke1997product} and Ghate  \cite{Ghate2000Eisenstein} independently discovered exactly $16$ eigenform product identities $g=f\cdot h$ for $\mathrm{SL}_2(\mathbb{Z})$, all of which hold trivially for dimension reasons. Later, Johnson \cite{Johnson2013eigenforms} extended this result to $61$ eigenform product identities over all levels, all weights and Nebentypus, some of which hold non-trivially. Recently, Joshi and Zhang \cite{Joshi-Zhang2019Hilbert} generalized this question to Hilbert modular forms over real quadratic fields, proving the finiteness of eigenform product identities $g=f\cdot h$  among full-level Hecke eigenforms of weight $2$ or greater. They showed that there exist two eigenform product identities when $F=\mathbb{Q}(\sqrt{5})$. You and Zhang \cite{You-Zhang2021Hilbert}  further established the finiteness of eigenform product identities over all totally real number fields of fixed degree $n$. 

In this paper, we enumerate all such product identities over all quadratic fields with narrow class number one and all product identities of two distinct-weight Eisenstein series over all totally real number fields of degree $3$ or greater. Our results extend  those  in \cite{Joshi-Zhang2019Hilbert} and \cite{You-Zhang2021Hilbert}, in the sense that we exhaust the possibilities  of all such identities in the described setups. More precisely,
we prove the following theorems.
\begin{Thm}\label{Thm1}
Over all real quadratic fields $F$ of narrow class number one and full-level Hecke eigenforms of parallel weights, eigenform product identities exist only for $F=\mathbb Q(\sqrt 5)$ under the grand Riemann hypothesis, with explicit identities
$$E_4=60E_2^2,\quad h_8=120E_2\cdot h_6,$$ 
as established in \cite[Theorem 7.4]{Joshi-Zhang2019Hilbert}.
Here $E_k$ is the normalized Hecke eigenform of weight $k$ in the Eisenstein subspace, and  $h_6, h_8$ are the unique normalized cuspidal eigenforms of weights $6$ and $8$ respectively. More precisely, over all such fields with discriminant $D>5$, we have  
\begin{enumerate}
\item No eigenform product identity $g = f \cdot h$ exists when $g,f,h$ are Eisenstein series of weight $2$ or greater, and $f,h$ are normalized.
\item No eigenform product identity $g = f \cdot h$ exists where $g$ is a Hecke eigenform, with one of $f,h$ a normalized Eisenstein series of weight $4$ or greater and the other a normalized cuspidal eigenform. Under the grand Riemann hypothesis, this nonexistence extends to identities involving weight $2$ normalized Eisenstein series paired with any normalized cuspidal eigenforms.
\end{enumerate}
\end{Thm}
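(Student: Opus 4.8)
The plan is to run through the three possibilities for the pair $(f,h)$ — both cuspidal, both Eisenstein, or one of each — after first recording how Fourier coefficients behave under products. Since $c_\nu(f)=0$ unless $\nu=0$ or $\nu$ is totally positive, the $\mathfrak n$-th coefficient of $g=f\cdot h$ with $\mathfrak n=(\nu)$ is $c_\nu(g)=\sum_{\mu+\lambda=\nu}c_\mu(f)c_\lambda(h)$, summed over $\mu,\lambda$ each equal to $0$ or totally positive. The key elementary input is that $\nu=1$ admits no splitting $1=\mu+\lambda$ with $\mu,\lambda$ both totally positive integers: such a $\mu$ would have $\mathrm{Tr}(\mu)=1$ and $N(\mu)=\mu(1-\mu)\in(0,\tfrac14)$, impossible for the rational integer $N(\mu)$. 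Hence $c(\mathcal O,g)=c_0(f)c(\mathcal O,h)+c(\mathcal O,f)c_0(h)$ and $c_0(g)=c_0(f)c_0(h)$. If $f,h$ are both cuspidal then $c_0(f)=c_0(h)=0$, forcing $c(\mathcal O,g)=0$; but a Hecke eigenform has $c(\mathcal O,g)\neq0$, so this case is impossible over every $F$.

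Next suppose $f=E_{k_1}$, $h=E_{k_2}$ are normalized Eisenstein series, using $\dim\mathcal{E}_k(\Gamma_F)=1$. Since $c_0(g)=c_0(E_{k_1})c_0(E_{k_2})\neq0$, the eigenform $g$ is Eisenstein, say $g=\gamma E_{k_1+k_2}$. Comparing $\mathcal O$-th coefficients gives $\gamma=c_0(E_{k_1})+c_0(E_{k_2})$, and comparing constant terms gives $c_0(E_{k_1})c_0(E_{k_2})=\gamma\,c_0(E_{k_1+k_2})$; eliminating $\gamma$ via \eqref{EisensteinCoe} yields the single relation
$$\frac{1}{\zeta_F(1-k_1-k_2)}=\frac{1}{\zeta_F(1-k_1)}+\frac{1}{\zeta_F(1-k_2)}.$$
For a real quadratic field $\zeta_F=\zeta\cdot L(\,\cdot\,,\chi_D)$, and the functional equation gives $\zeta_F(1-k)=D^{\,k-1/2}A(k)\,\zeta_F(k)$ with $A(k)$ a positive factor depending only on $k$ and $n$ (for $n=2$ the archimedean ratio is a square), so $\zeta_F(1-k)>0$ for every even $k\geq2$. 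All three terms above are then positive, so the relation can hold only if $\zeta_F(1-k_1-k_2)<\min\{\zeta_F(1-k_1),\zeta_F(1-k_2)\}$, whereas the factor $D^{\,k-1/2}$ forces the reverse inequality once $D>5$. The lone borderline case is $k_1=k_2=2$, where the relation reduces to $\zeta_F(-1)=2\zeta_F(-3)$; the ratio $\zeta_F(-1)/\zeta_F(-3)$ equals $2$ exactly at $D=5$ and is $<2$ for all larger $D$ by the same growth estimate, so after a finite check of small weights part (1) follows.

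The substantial case is (2), where $f=E_k$ and $h=\phi$ is a normalized cuspidal eigenform of weight $l$. Now $c_0(g)=0$, so $g$ is cuspidal; writing $g=\gamma\tilde g$ with $\tilde g$ normalized of weight $k+l$, the $\mathcal O$-th coefficient pins down $\gamma=c_0(E_k)=2^{-n}\zeta_F(1-k)$. I would then compare, at primes $\mathfrak p=(\pi)$, two descriptions of the coefficients of $g$. On one side the eigenform recursion \eqref{primeCoe} for $\tilde g$, after clearing $\gamma$, reads $\gamma\,c(\mathfrak p^2,g)=c(\mathfrak p,g)^2-\gamma^2N(\mathfrak p)^{k+l-1}$, and \eqref{cuspFourierBound} bounds $|c(\mathfrak p,g)|\leq2|\gamma|N(\mathfrak p)^{(k+l-1)/2}$. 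On the other side $c(\mathfrak p,g)=c_0(E_k)\,c(\mathfrak p,\phi)+\sum_{0\ll\mu\ll\pi}c_\mu(E_k)\,c_{\pi-\mu}(\phi)$, a boundary term together with an additive convolution. Because $|\gamma|=2^{-n}|\zeta_F(1-k)|$ grows like $D^{\,k-1/2}$ in the discriminant, whereas the right-hand sides are constrained by the Ramanujan bound and the Eisenstein growth \eqref{EisensteinCoe}, the two descriptions become incompatible for $D$ large, and the task is to make the threshold $D>5$ explicit.

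The hard part will be estimating the additive convolution $\sum_{0\ll\mu\ll\pi}c_\mu(E_k)\,c_{\pi-\mu}(\phi)$ — a Rankin--Selberg quantity — sharply enough to turn this size mismatch into a strict contradiction for every $D>5$. I expect the margin to be comfortable when $k\geq4$, so that an unconditional bound suffices there, but to be borderline for the weight-$2$ series $E_2$, where $\zeta_F(-1)\asymp D^{3/2}$ is the slowest-growing case and leaves the least room; at that point I would invoke the grand Riemann hypothesis to obtain the effective control needed (on the relevant Rankin--Selberg $L$-value, equivalently on the distribution of primes of small norm), which is precisely why the weight-$2$ conclusion is stated conditionally. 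Finally, the two surviving solutions are the $D=5$ equalities isolated above: $E_4=60E_2^2$ is the $k_1=k_2=2$ solution of the Eisenstein relation, and $h_8=120E_2\cdot h_6$ is the residual weight-$2$ mixed case at $D=5$, matching \cite[Theorem 7.4]{Joshi-Zhang2019Hilbert} and completing the classification.
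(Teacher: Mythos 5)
Your reduction to three cases and your treatment of the cuspidal--cuspidal case agree with the paper, and in the Eisenstein--Eisenstein case your constant-term relation is exactly the paper's starting point \eqref{EisensteinConstant}. But your argument for part (1) has a gap: you claim positivity plus ``the factor $D^{k-1/2}$'' forces $\zeta_F(1-k_1-k_2)>\zeta_F(1-k_i)$ once $D>5$. This is false as stated, because the functional equation produces $(D/4\pi^2)^{k-1/2}\Gamma(k)^2$ times ratios of zeta values, and for every admissible discriminant $D\in\{8,13,17,29,37\}$ below $4\pi^2\approx 39.5$ the factor $(D/4\pi^2)^{k-1/2}$ is \emph{decreasing} in $k$. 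The inequality you want is rescued only by the $\Gamma$-factors, and whether they win at small weights is a genuinely numerical question: the paper must treat $D\geq 41$ and each smaller $D$ separately (Propositions \ref{Lemgeq41} and \ref{lemleq41}), compute for instance $C(8,4,2)\approx 7.23$ explicitly, and in the equal-weight case first exploit the coefficient at $(2)$ (inert versus non-inert, \eqref{equalweight} and \eqref{EE}) to reduce to the finite list of Table \ref{tab:p-max-simple} before checking \eqref{EisensteinConstant} on each pair. Your ``finite check of small weights'' is asserted, not performed, and the monotonicity that would make it finite is precisely what fails for $5<D<4\pi^2$.

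The serious gap is part (2). The comparison you propose --- Hecke recursion against the additive convolution, bounded by Ramanujan and Eisenstein growth --- is exactly how the paper derives \eqref{eq:3.4}--\eqref{eq:3.6.1} (at the ideals $(2),(3),(4)$, via \cite[Lemma 4.3]{Joshi-Zhang2019Hilbert} and Lemma \ref{EisensteinCoeBound}), but that comparison can only \emph{confine} $(k_1,k_2,D)$ to a finite set, never eliminate it: Tables \ref{E-S-(2) inert} and \ref{E-S-(2) not-inert} show surviving triples with $D$ as large as $3517$ when $k_1=2$, and with $k_2$ up to $42$ even when $k_1=4$, so your expectation of a ``comfortable margin'' for weight $\geq 4$ is contradicted by the paper's own computations. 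The surviving triples are killed by an ingredient your outline does not contain and that no sharpening of the single-prime convolution estimate supplies: the Rankin--Cohen bracket theorem of \cite{Rankin-Cohen} (Theorem \ref{RC2024}), which rules out $E_{k_1}\cdot h$ being an eigenform whenever $\dim S_{k_1+k_2}(\Gamma_F)>1$, combined with the Thomas--Vasquez dimension formula and class-number estimates (Lemma \ref{DimThomas}, Corollary \ref{Dim}), a Magma check for $D=8$, and Ishikawa's tables \cite{Ishikawa1988dim} to exclude weight-$2$ cuspidal eigenforms. Moreover, GRH enters only through Theorem \ref{RC2024} in the case $k_1=2$ --- not through control of ``Rankin--Selberg values'' or ``primes of small norm'' as you suggest --- which is exactly why the weight-$2$ half of part (2) is stated conditionally. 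Without this reduction, or a genuine substitute for it, your plan does not yield the classification for $D>5$.
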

It is immediate that the product of two cuspidal eigenforms cannot be an eigenform since its  Fourier coefficient at the unit ideal vanishes. Therefore Theorem \ref{Thm1} separates into two cases: either one of $f,h$ is cuspidal or both are Eisenstein series. Inspired by the method of \cite{Joshi-Zhang2019Hilbert}, we use the relations between Fourier coefficients at small primes or at powers of small primes in the eigenform product identity $g=f\cdot h$
 to obtain effective bounds for such identities. As the numerical values of the Fourier coefficients of eigenforms are difficult to compute when the narrow class number exceeds one, we restrict our attention to the case where the narrow class number equals one.

 For the first case of Theorem \ref{Thm1}, we first bound the discriminants of real quadratic fields in which an eigenform product identity may occur. For each such real quadratic field, we then bound the possible weights for which  the eigenform product identity can hold. Finally, by comparing Fourier coefficients of Eisenstein series using SageMath, we obtain a complete list of eigenform product identities.
 
 For the second case of Theorem \ref{Thm1}, effective bounds on the discriminant and the weight can be derived in a similar manner. However, computing the Fourier coefficients of eigenforms in higher-dimensional cusp form spaces is computationally difficult. We therefore apply the main theorem of \cite{Rankin-Cohen} together with the grand Riemann hypothesis (see page 60 of \cite{Borwein2008Riemann}) to conclude that eigenform product identities hold trivially for dimension reasons (see Theorem \ref{RC2024} for details). This allows us to avoid the computation of Fourier coefficients in higher dimensions. 
 \begin{Rmk}
Note that only the second part of Theorem \ref{Thm1} depends on the grand Riemann hypothesis, which predicts that all zeros of a normalized automorphic $L$-function with $0<\mathrm{Re}(s)<1$ lie on the line $\mathrm{Re}(s)=\frac{1}{2}$. The main theorem of \cite{Rankin-Cohen} does not cover the case $g = E_2 \cdot h$ with $h$ cuspidal, since the relevant point does not lie in the region of absolute convergence of the Rankin–Selberg $L$-function. However, under the grand Riemann hypothesis, this case can be handled.  
 \end{Rmk}

In essence, under the grand Riemann hypothesis, our result shows that Hecke eigenform product identities occur only when $F=\QQ (\sqrt{5})$ for dimension reasons. Explicit dimension formula involving the Dedekind zeta values implies that the dimension of the relevant space grows with the discriminant. As 
$F=\QQ (\sqrt{5})$ possesses the minimal discriminant among totally real quadratic fields, it yields the minimal dimension required for such identities to hold trivially. Indeed, we establish relations among special values of the Dedekind zeta functions, with estimates involving factors of the discriminant, and increasing the discriminant invalidates these relations. Similarly, we can generalize our results to arbitrary totally real number fields of degree $3$ or greater, concerning the case of unequal-weight Eisenstein series.  

\begin{Thm}\label{Thm2}
Over all totally real number fields of degree $n>2$ and all full-level Hecke eigenforms of weight $2$ or greater, no eigenform product identity  $g=f\cdot h$ exists, where $g,f,h$ are Hecke eigenforms in Eisenstein subspaces with $f,h$ being normalized and having distinct weights.
\end{Thm}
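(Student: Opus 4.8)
The plan is to analyze the Fourier coefficient identity forced by $g = f \cdot h$ where $f = E_k$ and $h = E_l$ are the normalized Eisenstein series of distinct weights $k \neq l$ (with $k+l$ the weight of $g$), and derive a contradiction from the explicit divisor-sum formulas in \eqref{EisensteinCoe}. Since $h^+ = 1$ is assumed for Hecke theory, the Eisenstein subspace is one-dimensional, so $g$ must be a scalar multiple of $E_{k+l}$; comparing the $\mathcal{O}$-th coefficients, $g = c_0 \cdot E_{k+l}$ where the normalization $c(\mathcal{O}, E_k) = c(\mathcal{O}, E_l) = 1$ forces the leading coefficient to match, and I would track the exact proportionality constant via the constant terms $c_0(E_k) = 2^{-n}\zeta_F(1-k)$.

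First I would write out the multiplicativity constraint on prime powers: for any prime ideal $\mathfrak p$, comparing $c(\mathfrak p, g)$ on both sides gives $N(\mathfrak p)^{k+l-1} + 1 = \lambda\bigl(N(\mathfrak p)^{k-1}+1\bigr)\bigl(N(\mathfrak p)^{l-1}+1\bigr)$ for the appropriate constant $\lambda$, and similarly for $c(\mathfrak p^2, \cdot)$. Expanding the right side produces cross terms $N(\mathfrak p)^{k-1} + N(\mathfrak p)^{l-1}$ that cannot be absorbed into the left side's divisor sum $\sum_{\mathfrak r \mid \mathfrak p} N(\mathfrak r)^{k+l-1} = N(\mathfrak p)^{k+l-1} + 1$ once $k \neq l$. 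The key is that the divisor function $\sigma_{k+l-1}$ is genuinely different from the product $\sigma_{k-1} \cdot \sigma_{l-1}$ precisely when the two weights are unequal: matching the coefficients at $\mathfrak p$ and $\mathfrak p^2$ overdetermines $\lambda$ and the constant terms simultaneously, and these two determinations will be incompatible.

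The main obstacle, and the reason the degree hypothesis $n > 2$ enters, is controlling the constant-term relation $c_0(g) = c_0(f)\,c_0(h)$, which reads $2^{-n}\zeta_F(1-k-l) = \lambda \cdot 2^{-2n}\zeta_F(1-k)\zeta_F(1-l)$ against the value $\lambda$ extracted from the prime-power coefficients; this forces a rigid identity among special values of the Dedekind zeta function, namely $\zeta_F(1-k-l) = 2^{-n}\zeta_F(1-k)\zeta_F(1-l)$. I would then invoke the analytic estimates on $\zeta_F(1-k)$ in terms of the discriminant $D$ and degree $n$ (the same mechanism flagged in the introduction, that increasing the discriminant invalidates the relation), showing that for $n > 2$ the magnitudes on the two sides of this zeta identity are incompatible: the functional equation expresses $\zeta_F(1-k)$ via $\zeta_F(k)$ times a gamma factor and a power of $D$, and the resulting power of $D^{\,k+l}$ versus $D^{\,k} \cdot D^{\,l}$ together with the factorials in the gamma factors cannot balance against the factor $2^{-n}$ once $n \geq 3$.

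Concretely, I expect the hardest estimate to be bounding $\zeta_F(k)$ and $\zeta_F(l)$ away from their trivial values uniformly in $F$: since $\zeta_F(s) = \prod_{\mathfrak p}(1 - N(\mathfrak p)^{-s})^{-1}$ is close to $1$ for large $s$ but the gamma and discriminant factors grow, I would use the functional equation to convert everything to the critical-strip-free region $s > 1$ and bound $1 < \zeta_F(k) < \zeta(k)^n$ (an Euler-product comparison), then check that the surviving inequality among $D^{k+l/2}$, the Gamma-factor ratio, and $2^{-n}$ fails for every totally real field of degree $n > 2$. This reduces the whole theorem to a finite, explicit inequality in the variables $n, k, l, D$ subject to $D \geq d_n^{\min}$ (the minimal discriminant in degree $n$), which can be closed by a direct estimate rather than by appeal to any deep input.
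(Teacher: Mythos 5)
There is a genuine gap, and it sits at the core of your argument. Your prime-power relation $N(\mathfrak p)^{k+l-1}+1 = \lambda\bigl(N(\mathfrak p)^{k-1}+1\bigr)\bigl(N(\mathfrak p)^{l-1}+1\bigr)$ presumes that $c(\mathfrak p, f\cdot h) = c(\mathfrak p,f)\,c(\mathfrak p,h)$, but multiplication of modular forms \emph{convolves} Fourier coefficients rather than multiplying them: for $(\nu)=\mathfrak p$ one has $c_\nu(fh) = c_0(f)c_\nu(h) + c_\nu(f)c_0(h) + \sum c_\mu(f)c_{\mu'}(h)$, the sum running over decompositions $\nu=\mu+\mu'$ into totally positive integers. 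The ``overdetermination of $\lambda$'' you describe is therefore an artifact of a false premise, not a contradiction. A decisive sanity check: over $\mathbb{Q}$ the distinct-weight identities $E_4E_6=E_{10}$ and $E_6E_8=E_{14}$ do hold, even though $\sigma_{9}\neq\sigma_3\cdot\sigma_5$ pointwise, so no argument resting only on the divisor-sum shape of the coefficients --- without the degree and discriminant entering quantitatively --- can prove the theorem. Your constant-term relation is also not the one forced by $g=f\cdot h$: since a totally positive generator of $\mathcal O$ cannot be written as a sum of two totally positive integers (compare traces), one gets $c(\mathcal O, fh)=c_0(f)+c_0(h)$, and combining this with $c_0(fh)=c_0(f)c_0(h)$ yields the \emph{harmonic} identity $\zeta_F(1-k-l)^{-1}=\zeta_F(1-k)^{-1}+\zeta_F(1-l)^{-1}$, in which the $2^{-n}$ cancels entirely; this is \eqref{eq:Eisensteinconstantterm} (equivalently \eqref{EisensteinConstant}), not your product relation $\zeta_F(1-k-l)=2^{-n}\zeta_F(1-k)\zeta_F(1-l)$. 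In particular, the factor $2^{-n}$ is not what makes the hypothesis $n>2$ relevant.

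The difference is not cosmetic. Writing the true relation as $1=\left|\frac{A}{B}+1\right|\cdot\left|\frac{C}{A}\right|$ as in \eqref{1inequality}, the delicate point is that $A$ and $B$ are $L$-values that can have opposite signs, so $\left|\frac{A}{B}+1\right|$ could a priori be small; ruling out this cancellation is precisely the content of Lemma \ref{deltaN}, which needs the minimal discriminants $49$, $725$, $14641$ in degrees $3,4,5$ from \cite{Voight} and Odlyzko's bound (Proposition \ref{Takeuchi's bound}) in higher degree. Your product relation contains no sum, so your plan never confronts this issue at all. Moreover, Theorem \ref{Thm2} carries no narrow class number hypothesis: in general the Eisenstein subspace is spanned by the series $E_k(\phi,\psi)$ attached to pairs of narrow class characters, and the paper must run the argument with Hecke $L$-values $L(1-k,\phi^{-1}\psi)$ rather than $\zeta_F(1-k)$; your reduction to a one-dimensional Eisenstein space under $h^+=1$ does not cover the statement. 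The one part of your plan that does align with the paper is the endgame --- functional-equation bounds on the relevant $L$-values combined with discriminant lower bounds growing in $n$, reduced to an explicit numerical inequality --- but in your write-up it is applied to relations that the hypothesis $g=f\cdot h$ does not actually produce.
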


We expect  our methods to  apply equally to other cases, such as equal-weight Eisenstein series and  cusp form-Eisenstein series identities, once appropriate data including  special values of Hecke $L$-series and explicit dimension formulas for spaces of cusp forms  become available. We hope to address this in future work.

The layout of this paper is as follows. In Section \ref{Section 2}, we set up the notations and introduce the basics. Section \ref{Section 3} is dedicated to the first case of Theorem \ref{Thm1}, establishing Propositions \ref{Lemgeq41}, \ref{lemleq41}, and \ref{prop3.6}. Section \ref{Section 4} completes the proof of Theorem \ref{Thm1} by handling the second case via Propositions \ref{(2)Inert} and \ref{(2)Non-inert}. Finally, Section \ref{Section 5} provides the proof of Theorem \ref{Thm2}.

\section{Preliminaries}\label{Section 2}
Let $F$ be a totally real number field of degree $n$ over $\QQ$, with ring of integers $\mathcal{O}$, different $\mathfrak{d}$, discriminant $D$, class number $h$, and narrow class number $h^+$. Denote its group of positive units by $\mathcal{O}^+$ and that of totally positive units by $\mathcal{O}^{\times+}$. In this paper, we are only interested in the Hilbert modular group of full level
$$
\Gamma_F=\Gamma_{0}(\mathcal{O},\mathcal{O})=\left\{\gamma=\left(\begin{array}{ll}
a & b \\
c & d
\end{array}\right) \in\left(\begin{array}{cc}
\mathcal{O} & \mathfrak{d}^{-1} \\
 \mathfrak{d} & \mathcal{O}
\end{array}\right): \mathrm{det}(\gamma) \in \mathcal{O}^{\times+}\right\},
$$ which can be embedded into $\mathrm{GL}^+_{2}(\mathbb{R})^n$ by
$$\left(\begin{matrix}
a & b \\
c & d
\end{matrix}\right) \mapsto\left(\left(\begin{matrix}
a_1 & b_1 \\
c_1 & d_1
\end{matrix}\right), \cdots,\left(\begin{matrix}
a_n & b_n \\
c_n & d_n
\end{matrix}\right)\right),$$
where $a\mapsto{(a_1,\cdots, a_n)}$ gives the embedding $F\subset F\otimes_{\mathbb Q}\mathbb R$. 
Let $\mathbb{H}$ be the complex upper half plane. A Hilbert modular form of parallel weight $k\in\mathbb{Z}$ for $\Gamma_F$ is a holomorphic function $f$ on $\mathbb{H}^{n}$ such that 
$$
\left(f|_{k}\gamma\right)(z)=\mathrm{det}(\gamma)^{k/2}j(\gamma,z)^{-k} f(\gamma z)=f(z), \text{ for any } \gamma\in \Gamma_F,
$$
where $z=\left(z_{1},\cdots,z_{n}\right) \in \mathbb H^{n}$,  $\mathrm{det}(\gamma)=(\mathrm{det}(\gamma_1),\cdots,\mathrm{det}(\gamma_n))$ and  $$\gamma z=\left(\frac{a_{1} z_{1}+b_{1}}{c_{1} z_{1}+d_{1}}, \cdots, \frac{a_{n} z_{n}+b_{n}}{c_{n} z_{n}+d_{n}}\right),\quad j(\gamma,z)=\left(c_{1} z_{1}+d_{1},\cdots,c_{n} z_{n}+d_{n}\right).$$
Denote by $M_{k}\left(\Gamma_F\right)$ and  $S_{k}\left(\Gamma_F\right)$  the space of Hilbert modular forms and that of cusp forms of weight $k\in \mathbb{N}$ for $\Gamma_F$ respectively. Here $\mathbb{N}$ is the set of natural numbers and $0\in\mathbb N$.  Let $\mathcal{E}_k(\Gamma_F)$ be the Eisenstein subspace, orthogonal to $S_k(\Gamma_F)$ in $M_k(\Gamma_F)$. Every $f \in M_{k}\left(\Gamma_F\right)$ has a unique Fourier expansion at the cusp of the form
$$f=\sum_{\nu\in \mathcal{O}}c_{\nu}(f)\mathrm{exp}\left(2\pi i \sum_{i=1}^{n} \nu_{i} z_{i}\right).$$

Now, let us briefly review the Hecke theory for Hilbert modular forms, and for simplicity, we restrict to $h^+=1$. For any non-zero integral ideal $\mathfrak{n}=(\nu)$ with $\nu\in\mathcal{O}$, the $\mathfrak n$-th Fourier coefficient of $f$ is defined to be $c(\mathfrak{n}, f)=c_{\nu}(f)$ (see \cite[Eq (2.24)]{Shimura1978Hilbert}), and the constant term of $f$ is denoted by $c_0(f)$. The $\mathfrak n$-th Hecke operator $T_\mathfrak{n}$ acts on $M_{k}\left(\Gamma_F\right)$, preserving  $S_{k}\left(\Gamma_F\right)$ and  $\mathcal E_{k}\left(\Gamma_F\right)$. A Hecke eigenform is a non-zero common eigenfunction for all Hecke operators $T_{\mathfrak{n}}$. Since the Hecke operators $T_\mathfrak{n}$ on $S_k(\Gamma_F)$ are self-adjoint and mutually commute, they admit a basis of eigenforms.  Any Hecke eigenform $f$ satisfies $c(\mathcal{O},f)\neq 0$, and is called normalized if $c(\mathcal{O},f)=1$ (see  \cite[p.650]{Shimura1978Hilbert}). 
For a normalized eigenform $f$, the $T_\mathfrak{n}$-eigenvalue is exactly $c(\mathfrak{n},f)$, and if $\mathfrak p$ is a prime ideal, then 
\begin{align}\label{primeCoe}
c(\mathfrak{p}^{j+1},f)&=c(\mathfrak{p}^{j},f)c(\mathfrak{p},f)-N(\mathfrak{p})^{k-1}c(\mathfrak{p}^{j-1},f),\quad j=1,2,\cdots.
\end{align}

If $f\in S_k(\Gamma_F)$ is a normalized Hecke eigenform, then by the Ramanujan conjecture proved in \cite[Theorem 1]{Blasius2006Ramanujan}, for any prime ideal $\mathfrak{p}$,
\begin{align}\label{cuspFourierBound}
|c(\mathfrak p, f)|\leq
2N(\mathfrak p)^{\frac{k-1}{2}}.
\end{align}
If $F$ has narrow class number one, the dimension of $\mathcal{E}_{k}(\Gamma_F)$ is equal to $1$. Denote its  normalized eigenform by $E_k$, whose Fourier coefficients satisfy   \cite{Dasgupta2011Hilbert, Shimura1978Hilbert}:
\begin{equation}\label{EisensteinCoe}
c(\mathfrak n,E_k)=\sum\nolimits_{\mathfrak r\mid \mathfrak n}N(\mathfrak r)^{k-1},\quad c_0(E_k)=2^{-n}\zeta_F(1-k)
\end{equation}
 for any non-zero integral ideal $\mathfrak n$, with  $\zeta_F(k)$ the Dedekind zeta function of $F$. Note that $\zeta_F(k)$ satisfies the following bounds (see \cite[Eq (2.3)]{Joshi-Zhang2019Hilbert}) 
\begin{equation}\label{Zeta-bound}
\begin{aligned}
\frac{2}{\pi} \left( \frac{D}{4\pi^2} \right)^{k - \frac{1}{2}} \Gamma(k)^2 
\frac{\zeta(4k)}{\zeta^2(k)} 
\leq |\zeta_F(1-k)| \leq \frac{2}{\pi} \left( \frac{D}{4\pi^2} \right)^{k - \frac{1}{2}} \Gamma(k)^2\zeta^2(k),
\end{aligned}
\end{equation}
where $\zeta(k)$ is the Riemann zeta function and we further have 
\begin{align}\label{zeta_bound}
\frac{72}{\pi^5} \left(\frac{D}{4\pi^2}\right)^{k-\frac{1}{2}} \Gamma(k)^2 &\leq |\zeta_F(1-k)| \leq \frac{\pi^3}{18} \left(\frac{D}{4\pi^2}\right)^{k-\frac{1}{2}} \Gamma(k)^2,
\end{align} since $1<\zeta(k)\leq\zeta(2)=\frac{\pi^2}{6}$.


\section{The case of two Eisenstein series}\label{Section 3}
In this section, let $F$ be a real quadratic field with narrow class number one and $D > 5$. We prove the first part of Theorem \ref{Thm1} by contradiction,  assuming that $f$ and $h$ are normalized Hecke eigenforms with $c_0(f)c_0(h) \neq 0$, and that their product $g = f \cdot h$ is also a Hecke eigenform. 
The mechanism for the contradiction relies on comparing the Fourier coefficients of both sides of this identity $g=f\cdot h$, which yields relations involving  special values of the Dedekind zeta function
 that are incompatible with analytic bounds derived from their growth properties. 

Since the Eisenstein subspace is non-trivial only if the weight is even, we assume $k_1,k_2\geq2$ are even in this case.  Put $f=E_{k_1}$ and $h=E_{k_2}$. Since $g$ is a Hecke eigenform, it must be proportional to $E_{k_1+k_2}$. By  comparing the term corresponding to the trivial ideal $\mathcal O$
 in $g=E_{k_1}\cdot E_{k_2}$, we have $$g=(c_0(f)+c_0(h))E_{k_1+k_2}.$$ Then from the constant terms, we have 
$$(c_0(f)+c_0(h))\cdot c_0(E_{k_1+k_2})=c_0(f)\cdot c_0(h).$$
Hence, by \eqref{EisensteinCoe}, we obtain the key equation
\begin{equation}\label{constant}
1= (\zeta_F(1 - k_1) + \zeta_F(1 - k_2)) \frac{\zeta_F(1 - k_1 - k_2)}{\zeta_F(1 - k_1)\zeta_F(1 - k_2)}.
\end{equation}

We treat the unequal-weight and the equal-weight cases separately. In the unequal-weight case, the bounds \eqref{Zeta-bound} for the Dedekind zeta function suffice to show that \eqref{constant} fails for  $D>5$. However, the equal-weight case  additionally requires the relations between Fourier coefficients at small primes to establish effective bounds for the eigenform product identities.
\subsection{The unequal-weight case} Let  $f=E_{k_1}$, $h=E_{k_2}$ and assume $k_1>k_2$ without loss of generality. As outlined above, we show that \eqref{constant} fails by establishing a lower bound on the right-hand side using \eqref{Zeta-bound}. Put 
\begin{align*}
&C(D,k_1,k_2)\\
=&\frac{\zeta(4(k_1+k_2))}{\zeta(k_1+k_2)^2\zeta(k_1)^2}  \left( \frac{D}{4\pi^2} \right)^{k_2} \frac{\Gamma(k_1 + k_2)^2}{\Gamma(k_1)^2} \left| \frac{\zeta(4k_1)}{\zeta(k_1)^2\zeta(k_2)^2}\left( \frac{D}{4\pi^2} \right)^{k_1 - k_2}\frac{\Gamma(k_1)^2}{\Gamma(k_2)^2} - 1 \right|\\
\geq &\frac{\zeta(4(k_1+k_2))}{\zeta(k_1+k_2)^2\zeta(k_1)^2}  \left( \frac{D\cdot k_1^{2}}{4\pi^2} \right)^{k_2}\left| \frac{\zeta(4k_1)}{\zeta(k_1)^2\zeta(k_2)^2}\left( \frac{D}{4\pi^2} \right)^{k_1 - k_2}\frac{\Gamma(k_1)^2}{\Gamma(k_2)^2} - 1 \right|,\end{align*}
since $\Gamma(k_1)/\Gamma(k_2)\geq k_2^{k_1-k_2}$.
 By  \eqref{Zeta-bound} and \eqref{constant}, the  identity $g=E_{k_1}\cdot E_{k_2}$ must satisfy  \begin{equation}\label{EisensteinConstant} 
\begin{aligned}
1 &= \left| (\zeta_F(1 - k_1) + \zeta_F(1 - k_2)) \frac{\zeta_F(1 - k_1 - k_2)}{\zeta_F(1 - k_1)\zeta_F(1 - k_2)} \right|\\
&\geq\big||\zeta_F(1-k_1)|-|\zeta_F(1-k_2)|\big|\cdot\left|\frac{\zeta_F(1-k_1-k_2)}{\zeta_F(1-k_1)\zeta_F(1-k_2)}\right|\\
&\geq C(D,k_1,k_2),
\end{aligned}
\end{equation}
which leads to the contradiction proved in the following propositions. 

\begin{Prop}\label{Lemgeq41}
There is no eigenform product identity $g=E_{k_1}\cdot E_{k_2}$ over all real quadratic fields with narrow class number $1$ and  $D\geq41$.
\end{Prop}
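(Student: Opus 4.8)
The strategy is proof by contradiction via \eqref{EisensteinConstant}: if the identity $g=E_{k_1}\cdot E_{k_2}$ holds, that inequality forces $C(D,k_1,k_2)\le 1$, so it suffices to show
\[
C(D,k_1,k_2)>1 \qquad\text{for all } D\ge 41 \text{ and all even } k_1>k_2\ge 2.
\]
I would work throughout from the lower bound for $C(D,k_1,k_2)$ already displayed before \eqref{EisensteinConstant}, in which $\Gamma(k_1+k_2)^2/\Gamma(k_1)^2$ has been replaced by $k_1^{2k_2}$, and abbreviate the quantity inside the remaining absolute value as
\[
A=\frac{\zeta(4k_1)}{\zeta(k_1)^2\zeta(k_2)^2}\left(\frac{D}{4\pi^2}\right)^{k_1-k_2}\frac{\Gamma(k_1)^2}{\Gamma(k_2)^2}.
\]

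The first and most delicate step is to remove the absolute value by proving $A>1$ uniformly. Here the hypothesis $D\ge 41$ is essential, since $41>4\pi^2$ gives $\left(\tfrac{D}{4\pi^2}\right)^{k_1-k_2}\ge 1$; combining this with $\Gamma(k_1)^2/\Gamma(k_2)^2\ge k_2^{2(k_1-k_2)}$, with $\zeta(4k_1)>1$, and with $\zeta(k_i)\le\zeta(2)$ yields
\[
A\ \ge\ \frac{1}{\zeta(2)^4}\left(\frac{Dk_2^2}{4\pi^2}\right)^{k_1-k_2}.
\]
Because $k_1-k_2\ge 2$ and the base satisfies $\tfrac{Dk_2^2}{4\pi^2}\ge\tfrac{41\cdot 4}{4\pi^2}>4$, this already gives $A\ge 2$ on the whole range, so $|A-1|=A-1\ge A/2$.

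Substituting $A-1\ge A/2$ and the same lower bound for $A$ into the displayed estimate for $C$, and bounding $\zeta(4(k_1+k_2)),\zeta(4k_1)>1$ together with every remaining zeta value by $\zeta(2)$, I obtain
\[
C(D,k_1,k_2)\ \ge\ \frac{1}{2\zeta(2)^8}\left(\frac{Dk_1^2}{4\pi^2}\right)^{k_2}\left(\frac{Dk_2^2}{4\pi^2}\right)^{k_1-k_2}.
\]
Since $k_1>k_2\ge 2$ and $D\ge 41$, both bases exceed $4$, so the product of the two powers is at least $4^{k_2}\cdot 4^{k_1-k_2}=4^{k_1}\ge 4^4=256$, the minimum of this lower bound being attained at the extremal even pair $(k_1,k_2)=(4,2)$. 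As $2\zeta(2)^8<256$, this gives $C(D,k_1,k_2)>1$ for all admissible $D,k_1,k_2$, contradicting \eqref{EisensteinConstant} and proving the proposition.

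I expect the uniform removal of the absolute value to be the crux: a priori $A$ could lie near $1$, making $|A-1|$ tiny and collapsing the bound. It is exactly the threshold $D\ge 41$ — the first narrow-class-number-one discriminant exceeding $4\pi^2$ — together with the factorial growth encoded in $\Gamma(k_1)^2/\Gamma(k_2)^2\ge k_2^{2(k_1-k_2)}$ that forces $A\ge 2$. Once that is secured, the competition between the genuinely exponential factor $4^{k_1}$ and the fixed constant $2\zeta(2)^8$ makes the contradiction immediate, and no delicate case analysis in $k_1,k_2$ is needed.
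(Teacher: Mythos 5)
Your proof is correct and follows essentially the same route as the paper: both derive a contradiction with \eqref{EisensteinConstant} by showing $C(D,k_1,k_2)>1$, first removing the absolute value via the bound $\Gamma(k_1)/\Gamma(k_2)\geq k_2^{k_1-k_2}$ together with $\zeta$-estimates and $D\geq 41>4\pi^2$. The only (harmless) differences are cosmetic: you use the slightly weaker constant $1/\zeta(2)^4$ where the paper uses $1/(\zeta(4)^2\zeta(2)^2)=291600/\pi^{12}$, and your trick $A\geq 2\Rightarrow|A-1|\geq A/2$ lets you keep the factor $4^{k_1}$ in place of the paper's direct numerical evaluation at the extremal pair $(k_1,k_2)=(4,2)$.
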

\begin{proof}
By \eqref{EisensteinConstant}, it suffices to prove $C(D,k_1,k_2)$ is always greater than $1$ when $D\geq41>4\pi^2$ and $2\leq k_2< k_2+2\leq k_1$. 
Note that $\Gamma(k_1)/\Gamma(k_2)\geq k_2^{k_1-k_2}$, we have  $$\frac{\zeta(4k_1)}{\zeta(k_1)^2\zeta(k_2)^2}\left( \frac{D}{4\pi^2} \right)^{k_1 - k_2}\frac{\Gamma(k_1)^2}{\Gamma(k_2)^2}\geq\frac{\zeta(4k_1)}{\zeta(k_1)^2\zeta(k_2)^2}\left( \frac{Dk_2^2}{4\pi^2} \right)^{k_1 - k_2}>\frac{291600}{\pi^{12}}\cdot 2^{4}>1,$$ since $1<\zeta(k)\leq\zeta(2)=\frac{\pi^2}{6}$ and $\zeta(4)=\frac{\pi^4}{90}$.
It follows that
\begin{equation*}
\begin{aligned}
&C(D,k_1,k_2)\\
\geq&\frac{\zeta(4(k_1+k_2))}{\zeta(k_1+k_2)^2\zeta(k_1)^2}  \left( \frac{D}{4\pi^2} \right)^{k_2} \frac{\Gamma(k_1 + k_2)^2}{\Gamma(k_1)^2} \left( \frac{\zeta(4k_1)}{\zeta(k_1)^2\zeta(k_2)^2}\left( \frac{Dk_2^2}{4\pi^2} \right)^{k_1 - k_2} - 1 \right)\\
>& \frac{291600}{\pi^{12}}\left( \frac{41\cdot 4^2}{4\pi^2} \right)^{2}\left( \frac{291600}{\pi^{12}}\left( \frac{41\cdot 2^2}{4\pi^2} \right)^{2} - 1 \right)>1
\end{aligned}
\end{equation*}
always holds for $k_1>k_2\geq2$ and $D\geq41$, which contradicts \eqref{EisensteinConstant}. Hence, there is no eigenform product identity when $D\geq41$.
\end{proof}

Now we only need to consider the case where $D\in\{8,13,17,29,37\}$, since $F$ has narrow class number one.
\begin{Prop}\label{lemleq41}
There is no eigenform product identity $g=E_{k_1}\cdot E_{k_2}$ over all real quadratic fields with  $D\in\{8,13,17,29,37\}$.
\end{Prop}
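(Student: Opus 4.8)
The plan is to run the same contradiction scheme as in Proposition \ref{Lemgeq41}: by \eqref{EisensteinConstant}, the existence of an identity $g=E_{k_1}\cdot E_{k_2}$ forces $C(D,k_1,k_2)\le 1$, so it suffices to prove $C(D,k_1,k_2)>1$ for every admissible pair $(k_1,k_2)$ with $k_1>k_2\ge 2$ even. The obstruction compared with the large-discriminant case is that now $D\le 37<4\pi^2$, so the factor $(D/4\pi^2)^{k_2}$ is \emph{decreasing} in $k_2$ and, more seriously, the quantity $A:=\frac{\zeta(4k_1)}{\zeta(k_1)^2\zeta(k_2)^2}\bigl(\frac{Dk_2^2}{4\pi^2}\bigr)^{k_1-k_2}$ that bounds the inner factor $|\cdots-1|$ from below need no longer exceed $1$ for small $k_1-k_2$. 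Thus the clean uniform estimate of Proposition \ref{Lemgeq41} breaks down, and the argument must split into an \emph{asymptotic regime} and a \emph{finite explicit check}.

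First I would isolate the asymptotic regime. The crude bound $\Gamma(k_1)/\Gamma(k_2)\ge k_2^{\,k_1-k_2}$ severely underestimates this ratio, which in fact grows factorially: $\Gamma(k_1+k_2)/\Gamma(k_2)=\prod_{j=0}^{k_1-1}(k_2+j)$. Feeding this genuine factorial growth into the definition of $C$ together with \eqref{Zeta-bound}, one sees that the dominant part of the lower bound for $C(D,k_1,k_2)$ behaves like $(D/4\pi^2)^{k_1}\,\Gamma(k_1+k_2)^2/\Gamma(k_2)^2$, whose factorial part overwhelms the exponentially small prefactor $(D/4\pi^2)^{k_1}$ even when $D<\pi^2$. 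Hence for each fixed $k_2$ the bound tends to $\infty$ as $k_1\to\infty$, and it likewise grows in $k_2$, so for each of the five discriminants there are explicit small thresholds $M_D,N_D$ with $C(D,k_1,k_2)>1$ whenever $k_2\ge M_D$, or whenever $k_2<M_D$ and $k_1\ge N_D$. This reduces the problem to the finite set of pairs $(k_1,k_2)$ with $k_2<M_D$ and $k_1<N_D$, for each $D\in\{8,13,17,29,37\}$.

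Finally I would dispatch this finite list directly. For these small weights the general estimate is no longer sharp enough, so instead of bounding $C$ I would return to the exact necessary condition \eqref{EisensteinConstant} and evaluate it using the exact rational special values $\zeta_F(1-k_1)$, $\zeta_F(1-k_2)$ and $\zeta_F(1-k_1-k_2)$, which are computable by Siegel's formula (equivalently, read off from \eqref{EisensteinCoe}) for each of the five fields $\mathbb{Q}(\sqrt{D})$; see \cite{Dasgupta2011Hilbert}. Substituting these rationals shows that $\bigl|(\zeta_F(1-k_1)+\zeta_F(1-k_2))\,\zeta_F(1-k_1-k_2)/(\zeta_F(1-k_1)\zeta_F(1-k_2))\bigr|\ne 1$ in every remaining case, contradicting \eqref{EisensteinConstant}. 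I expect the main obstacle to lie precisely in this step: because $D/4\pi^2<1$ destroys the monotonicity that made Proposition \ref{Lemgeq41} a single base-case computation, one must pin down the thresholds $M_D,N_D$ explicitly and verify a genuinely finite but nontrivial table of exact zeta values, with $D=8$ (the smallest discriminant, where the exponential factor is most hostile) producing the longest list of cases to check.
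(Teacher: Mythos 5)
Your proposal is correct, and its skeleton coincides with the paper's: reduce via \eqref{EisensteinConstant} to showing $C(D,k_1,k_2)>1$, eliminate all but finitely many pairs by playing the factorial growth of the Gamma ratios against the now-hostile exponential prefactor (since $D<4\pi^2$), then check the survivors. The genuine difference is the endgame. The paper never abandons the analytic bounds \eqref{Zeta-bound}: for $D=8$ it kills $k_2\ge 4$ by the same uniform estimate as in Proposition \ref{Lemgeq41}, kills $k_2=2$, $k_1\ge 6$ by the monotonicity of $f(k_1)=(8/4\pi^2)^{k_1-2}\Gamma(k_1)^2$ (consecutive ratio $2k_1^2/\pi^2>1$), and for the single surviving triple $(8,4,2)$ simply evaluates the same analytic bound numerically, $C(8,4,2)\approx 7.23>1$; the other discriminants are declared analogous and easier. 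You instead switch, for the residual finite list, to exact rational values of $\zeta_F(1-k)$ via Siegel's formula and verify that \eqref{EisensteinConstant} fails on the nose --- and this does work: for $(D,k_1,k_2)=(8,4,2)$, using $\zeta_F(-1)=1/12$, $\zeta_F(-3)=11/120$, $\zeta_F(-5)=361/252$, the left-hand side of \eqref{EisensteinConstant} equals $361/11\neq 1$. Your route buys robustness: it would still succeed if the inner factor $\frac{\zeta(4k_1)}{\zeta(k_1)^2\zeta(k_2)^2}(D/4\pi^2)^{k_1-k_2}\Gamma(k_1)^2/\Gamma(k_2)^2$ were accidentally close to $1$ for some small pair, a degeneracy the analytic bounds cannot exclude a priori. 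The paper's route buys economy: no special-value data is needed at all (in keeping with the concern about special values that motivated its narrow-class-number-one restriction), and it happens that this inner factor at $(8,4,2)$ is $\approx 0.47$, safely away from $1$, so the crude bound closes the case. One small inaccuracy in your write-up: the exact values cannot be ``read off from \eqref{EisensteinCoe}'' --- that formula only identifies $c_0(E_k)$ with $2^{-n}\zeta_F(1-k)$; for the rational numbers themselves you genuinely need a Siegel-type computation or tables.
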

\begin{proof}
Following the lines of Proposition \ref{Lemgeq41}, we give a detailed proof for $D=8$. We first consider $k_2\geq 4$ and have    $$\frac{\zeta(4k_1)}{\zeta(k_1)^2\zeta(k_2)^2}\left( \frac{8\cdot k_2^2}{4\pi^2} \right)^{k_1 - k_2}> \frac{291600}{\pi^{12}} \left( \frac{8\cdot 4^2}{4\pi^2} \right)^{2}>1.$$
It follows that   
\begin{align*}
C(8,k_1,k_2)> \frac{291600}{\pi^{12}}\left( \frac{8\cdot 6^2}{4\pi^2} \right)^{4} \left( \frac{291600}{\pi^{12}}\left( \frac{8\cdot 4^2}{4\pi^2} \right)^{2} - 1 \right)>1.
\end{align*}
 Hence, there is no eigenform product identity when $D=8$ and $k_1>k_2\geq4$.

 Now we only need to consider the case $k_2=2$ and determine the minimal value of $$\left| \frac{\zeta(4k_1)}{\zeta(k_1)^2\zeta(2)^2}\left( \frac{8}{4\pi^2} \right)^{k_1 -2}\frac{\Gamma(k_1)^2}{\Gamma(2)^2} - 1 \right|.$$ Note that the function $$f(k_1)=\left( \frac{8}{4\pi^2} \right)^{k_1 -2}\Gamma(k_1)^2$$ increases with respect to $k_1$ over $S = \{4 + 2k \mid k \in \mathbb{N}\}$, as for $k_1\geq 4$, $$\frac{f(k_1+1)}{f(k)}=\frac{2k_1^2}{\pi^2}>1.$$
Given that  $f(4)\geq 1.478$ and $f(6)\geq 24.281$, 
the inequality
$$\frac{\zeta(4k_1)}{\zeta(k_1)^2\zeta(2)^2}\left( \frac{8}{4\pi^2} \right)^{k_1 - 2}\frac{\Gamma(k_1)^2}{\Gamma(2)^2}>\frac{291600}{\pi^{12}}\cdot f(6)>1$$ holds for all $k_1\geq6$. It follows that for $k_1\geq 6$,
\begin{align*}
    C(8,k_1,2)>
        \frac{291600}{\pi^{12}}\left( \frac{8\cdot6^2}{4\pi^2} \right)^{2}\left(\frac{291600}{\pi^{12}}\cdot f(6)-1\right)>1, 
\end{align*}
which contradicts \eqref{EisensteinConstant}.  For the remaining triple $(D,k_1,k_2)=(8,4,2)$, a numerical evaluation using SageMath yields $C(8,k_1,k_2)\approx7.2291$, contradicting  \eqref{EisensteinConstant}.  Hence no eigenform product identity exists for $D=8$.
The proofs for  other cases are analogous to  the   $D=8$ case but easier, we therefore leave the details to the reader.
\end{proof}
\begin{Rmk}
The relevant data associated with the computation of 
$C(D,k_1,k_2)$
 and the proof of Proposition \ref{lemleq41} are available on  GitHub \cite{Code}.
\end{Rmk}
By Proposition \ref{Lemgeq41} and Proposition \ref{lemleq41}, no eigenform product identity of the form $g=E_{k_{1}}\cdot E_{k_{2}}$ with $k_{1}\neq k_{2}$ exists over all quadratic fields with narrow class number one and $D>5$. 
\subsection{The equal-weight case} Now we consider the equal-weight case and put $f=h=E_k$.  In this case, \eqref{constant} and \eqref{Zeta-bound} are insufficient to derive a contradiction. Therefore, we must examine the Fourier coefficients at small primes. 

If $(2)$ is not inert, we may assume without loss of generality that 
$(2)=\mathfrak p^2$ or $(2)=\mathfrak p \mathfrak p^{\prime}$, where $\mathfrak p$
 is a prime ideal. Comparing the Fourier coefficients at 
$\mathfrak p$
 on both sides of $g=f\cdot h $ yields 
\begin{align}\label{equalweight}
 (2^{2k-1}-2^{k-1})/\zeta_F(1-2k)=0,   
\end{align}
which corresponds to a special case of \cite[Eqs.~(5.4) and (5.5)]{Joshi-Zhang2019Hilbert}. Since \eqref{equalweight} is impossible for $k\geq 2$, eigenform product identity can only occur  when $(2)$ is inert. 

Put $$C(D,k)=\left(\frac{108}{ \pi^6}\right)^{2}  \cdot D^{\frac{1}{2}}  \cdot k.$$
If $(2)$ is inert, eigenform product identity must satisfy  \cite[Eq (5.3)]{Joshi-Zhang2019Hilbert}, that is 
$$\frac{4^{2k - 1} - 4^{k - 1}}{\zeta_F(1 - 2k)} = \frac{4}{\zeta_F(1 - k) \zeta_F(1 - k)}.$$
Combining this with \eqref{zeta_bound},
eigenform product identity should satisfy
\begin{equation}\label{EE}
\begin{aligned}
1 \geq 4^{1-2 k}\left(4^{2 k-1}-4^{k-1}\right)\geq 4^{2-2 k} \frac{\frac{72}{\pi^5}\left(\frac{D}{4 \pi^2}\right)^{2 k-\frac{1}{2}} \Gamma\left(2 k\right)^2}{\left(\frac{\pi^3}{18}\right)^2\left(\frac{D}{4 \pi^2}\right)^{2 k-1} \Gamma\left(k\right)^2 \Gamma\left(k\right)^2}\geq C(D,k)
\end{aligned}
\end{equation}
by the Stirling's bound on the binomial coefficients
$\binom{2n}{n} \geq n^{-\frac{1}{2}}2^{2n-1}$. 
\begin{Prop}\label{prop3.6}
There is no eigenform product identity $g=E_{k}\cdot E_{k}$ over all real quadratic fields with narrow class number one and $D>5$.
\end{Prop}
\begin{proof}
When $(2)$ is not inert, \eqref{equalweight} directly proves the result. Hence we only need to consider the case when $(2)$ is inert.
Note that 
\( C(D,k) \) increases with both $D$ and $k$. We first fix $D$ at its minimum value $13$, yielding $k\leq 20$ by \eqref{EE}. For each  such $k$, we determine the maximum $D$ via \eqref{EE} (see Table \ref{tab:p-max-simple}). Exhaustive verification of equality \eqref{constant} is carried out by evaluating the rational values of $\zeta_F$
 at negative integers via generalized Bernoulli numbers for all resulting 
$(D,k)$ pairs. We find that no eigenform product identities exist in this case. This finishes the proof.
\end{proof}
\begin{table}[ht]
    \centering
    \caption{Maximal  possible $D$ for weight $k$}
    \label{tab:p-max-simple}
    \begin{tabular}{|c|c|c|c|c|c|c|c|c|c|c|}
    \hline 
         $k$&2  &4  &6  &8  &10  &12  &14  &16  &18 &20\\\hline
         \text{Maximal $D$} & 1549  &389  &173  &61 &61 &37  &29  &13  &13  &13 \\
         \hline
    \end{tabular}
\end{table}
\begin{Rmk}
The source code used to generate Table \ref{tab:p-max-simple} and perform the exhaustive verification has been provided on GitHub \cite{Code}. Note that if the narrow class number of a real quadratic field is one, the norm of the fundamental unit must be $-1$. This implies that all odd prime factors of $D$ are congruent to
1 modulo 4. Furthermore, if 
$(2)$
 is inert,  the discriminant must satisfy $D \equiv 5 \pmod 8$.   Conversely, if $(2)$ is not inert, then either $2 \mid D$ or $D \equiv 1 \pmod 8$. 
\end{Rmk}
This completes the proof of the first part of Theorem \ref{Thm1}.
\section{The case of Eisenstein series and cusp forms}\label{Section 4}
In this section we assume $F$ is a real quadratic field with narrow class number one and $D>5$, so its fundamental unit $\epsilon_0$  has norm $-1$. Therefore, $M_k(\Gamma_F)=\{0\}$ for odd $k$ via the action of $\epsilon_0I$.  We consider the product of an Eisenstein series $f$ of even weight $k_1$ with a cuspidal eigenform $h$ of even weight $k_2$ and assume $g=f\cdot h$ is also a Hecke eigenform. 

 The proof of the second part of Theorem \ref{Thm1} proceeds in two steps. First, we identify all possible triples $(D,k_1,k_2)$  by analyzing the properties of the Fourier coefficients in the  eigenform product identities. Second, employing the main theorem of \cite{Rankin-Cohen} together with the grand Riemann hypothesis, we rule out those triples corresponding to non-trivial dimensions of the space of cusp forms.

To prove the second part of Theorem \ref{Thm1}, we first bound  the coefficient of Eisenstein series. 
\begin{Lem}\label{EisensteinCoeBound} 
If $\mathfrak m$ is a non-zero integral ideal of $F$, then $|c(\mathfrak m,E_k)|\leq N(\mathfrak m)^{k+1}$.
\end{Lem}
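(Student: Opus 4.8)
The plan is to exploit the explicit Fourier coefficient formula \eqref{EisensteinCoe}, which gives $c(\mathfrak m, E_k) = \sum_{\mathfrak r \mid \mathfrak m} N(\mathfrak r)^{k-1}$, a sum of strictly positive terms. Hence the absolute value equals the sum itself, and the task reduces to bounding a divisor sum of norms from above by $N(\mathfrak m)^{k+1}$.

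First I would observe that every integral divisor $\mathfrak r$ of $\mathfrak m$ satisfies $N(\mathfrak r) \leq N(\mathfrak m)$, so (using that the Eisenstein weight here is $k \geq 2$, hence $k - 1 \geq 1$) each summand obeys $N(\mathfrak r)^{k-1} \leq N(\mathfrak m)^{k-1}$. Summing over all $d(\mathfrak m)$ integral ideal divisors of $\mathfrak m$ yields
$$|c(\mathfrak m, E_k)| \leq d(\mathfrak m)\, N(\mathfrak m)^{k-1},$$
where $d(\mathfrak m)$ denotes the number of integral ideals dividing $\mathfrak m$.

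It then suffices to prove $d(\mathfrak m) \leq N(\mathfrak m)^2$. Both $d(\cdot)$ and $N(\cdot)^2$ are multiplicative over the factorization of $\mathfrak m$ into prime-power ideals, so I would reduce to the case $\mathfrak m = \mathfrak p^a$. For a prime power one has $d(\mathfrak p^a) = a+1$ while $N(\mathfrak p^a)^2 = N(\mathfrak p)^{2a} \geq 4^a$, since $N(\mathfrak p) \geq 2$ for any prime ideal. The elementary inequality $a + 1 \leq 4^a$, valid for all $a \geq 0$, closes the prime-power case, and multiplicativity lifts it to general $\mathfrak m$. Combining with the previous step gives
$$|c(\mathfrak m, E_k)| \leq N(\mathfrak m)^2 \cdot N(\mathfrak m)^{k-1} = N(\mathfrak m)^{k+1},$$
as claimed.

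This lemma is elementary and presents no serious obstacle. The only point requiring a little care is the passage from the per-divisor norm bound to the total count, which rests on the multiplicativity of the ideal divisor function together with the fact that the smallest possible norm of a prime ideal is $2$. The resulting bound is deliberately loose and far from optimal, but it has exactly the polynomial shape in $N(\mathfrak m)$ needed to feed into the subsequent estimates in the cusp--Eisenstein analysis.
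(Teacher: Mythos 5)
Your proof is correct, but it takes a genuinely different route from the paper's. The paper does not bound the divisor sum directly; instead, following \cite[Lemma 3.2]{You-Zhang2021Hilbert}, it argues prime by prime via the Hecke recursion: it feeds the single-prime bound $c(\mathfrak p,E_k)=1+N(\mathfrak p)^{k-1}\leq 2N(\mathfrak p)^{k-1}$ into \eqref{primeCoe}, deduces inductively that $|c(\mathfrak p^m,E_k)|\leq a_m N(\mathfrak p^m)^{k-1}$ where $a_0=1$, $a_1=2$, $a_{m+2}=2a_{m+1}+a_m$, bounds $a_m\leq 3^m< N(\mathfrak p)^{2m}$, and only then invokes multiplicativity of the coefficients. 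Your argument instead exploits the positivity visible in the explicit formula \eqref{EisensteinCoe}: since $c(\mathfrak m,E_k)=\sum_{\mathfrak r\mid\mathfrak m}N(\mathfrak r)^{k-1}$ is a sum of positive terms, everything reduces to the divisor-count estimate $d(\mathfrak m)\leq N(\mathfrak m)^2$, which you correctly establish by multiplicativity and $a+1\leq 4^a$. This is shorter, and in fact it gives more: since $a+1\leq 2^a\leq N(\mathfrak p)^a$ for all $a\geq 0$, one even has $d(\mathfrak m)\leq N(\mathfrak m)$ and hence the sharper bound $|c(\mathfrak m,E_k)|\leq N(\mathfrak m)^k$. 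What the paper's recursion-based template buys in exchange is generality: it uses only the prime-coefficient bound $|c(\mathfrak p,f)|\leq 2N(\mathfrak p)^{k-1}$ together with \eqref{primeCoe}, so the identical argument applies to cuspidal eigenforms through the Ramanujan bound \eqref{cuspFourierBound}, where no nonnegative closed-form expression for the coefficients is available. Both proofs are complete; yours is the more elementary one for the Eisenstein case at hand.
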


\begin{proof}
The proof follows the approach of \cite[Lemma 3.2]{You-Zhang2021Hilbert}. For any prime ideal $\mathfrak{p}$ and $j \geq 1$, we have 
\begin{align*}
|c(\mathfrak{p}^{j+1},E_k)| 
&= |c(\mathfrak{p}^{j},E_k)c(\mathfrak{p},E_k) - N(\mathfrak{p})^{k-1}c(\mathfrak{p}^{j-1},E_k)| \\
&\leq |c(\mathfrak{p}^{j},E_k)c(\mathfrak{p},E_k)| + |N(\mathfrak{p}^2)^{k-1}c(\mathfrak{p}^{j-1},E_k)|
\end{align*}
by \eqref{primeCoe}  and 
$c(\mathfrak{p},E_k) = 1 + N(\mathfrak{p})^{k-1} \leq 2N(\mathfrak{p})^{k-1}$ by \eqref{EisensteinCoe}. This implies
\[
|c(\mathfrak{p}^{m},E_k)| \leq a_m N(\mathfrak{p}^{m})^{k-1}, \quad m \geq 0,
\]
where $\{a_m\}$ satisfies $a_0=1$, $a_1=2$, and $a_{m+2}=2a_{m+1}+a_m$. Inductively, $a_m \leq 3^m$, yielding
\[
|c(\mathfrak{p}^{m},E_k)|\leq 3^{m}N(\mathfrak{p}^{m})^{k-1}<N(\mathfrak{p}^{2})^{m}\cdot N(\mathfrak{p}^{m})^{k-1}=N(\mathfrak{p}^{m})^{k+1}.
\] The case for non-zero integral ideals $\mathfrak{m}$ follows via coefficient multiplicativity.
\end{proof}
Next we recall the main theorem of \cite{Rankin-Cohen} concerning real quadratic fields and parallel weights, which establishes sufficient conditions for the non-existence of eigenform product identities. 
\begin{Thm}\cite[Main Theorem, Remark 3.4]{Rankin-Cohen}\label{RC2024}
Let $F$ be a real quadratic field of narrow class number one and $k,l\geq 2$ be even integers. Under the grand Riemann hypothesis, if $f=E_k$ is an Eisenstein series and $h \in S_{l}(\Gamma_F)$  a normalized eigenform, then $f\cdot h$ is not an eigenform whenever $\mathrm{dim}\ S_{k+l}(\Gamma_F)> 1$. 
\end{Thm}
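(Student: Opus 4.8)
The plan is to argue by contradiction: assume $g=E_k\cdot h$ is a Hecke eigenform and exhibit, when $\dim S_{k+l}(\Gamma_F)>1$, a nonzero projection of $g$ onto at least two distinct eigenforms, which is incompatible with $g$ being a single eigenform. First note $g=E_k h\in S_{k+l}(\Gamma_F)$: since $h$ is cuspidal and $E_k$ is holomorphic and bounded at every cusp, the product vanishes at all cusps. Because $h^+=1$ and the Hecke operators on $S_{k+l}(\Gamma_F)$ are self-adjoint and mutually commute, I fix an orthogonal basis $g_1,\dots,g_d$ of normalized Hecke eigenforms, where $d=\dim S_{k+l}(\Gamma_F)>1$. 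Expanding
\[
g=E_k h=\sum_{i=1}^{d}\frac{\langle E_k h,\,g_i\rangle}{\langle g_i,\,g_i\rangle}\,g_i,
\]
we see that $g$ can be a Hecke eigenform only if at most one coefficient $\langle E_k h,g_i\rangle$ is nonzero. Hence it suffices to prove $\langle E_k h,g_i\rangle\neq0$ for at least two indices $i$.

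Second, I would compute each Petersson product $\langle E_k h,g_i\rangle$ by the Rankin--Selberg method. Unfolding the Eisenstein series $E_k$ against $h\,\overline{g_i}$ collapses the integral to the cusp; carrying out the $x$- and $y$-integrations over $\Gamma_\infty\backslash\mathbb{H}^n$ (folding by $\mathcal{O}^{\times+}$ in the $y$-variables) yields
\[
\langle E_k h,\,g_i\rangle=A(k,l)\sum_{\mathfrak n}\frac{c(\mathfrak n,h)\,\overline{c(\mathfrak n,g_i)}}{N(\mathfrak n)^{k+l-1}},
\]
where $A(k,l)$ is an explicit product of a factor $\Gamma(k+l-1)^n>0$ and a regulator/unit-index constant, and the Dirichlet series is the Rankin--Selberg convolution of $h$ and $g_i$ evaluated at $s=k+l-1$. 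Since the eigenforms have real coefficients (trivial nebentypus over a totally real field) the value is real, and holomorphic projection along Rankin--Cohen brackets gives the same quantity, tying this step to the techniques of the cited paper. I would first verify $A(k,l)\neq0$ (no Gamma pole at these weights), so that non-vanishing of $\langle E_k h,g_i\rangle$ reduces to non-vanishing of the convolution $L$-value at $s=k+l-1$.

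Third, and this is the crux, I would establish non-vanishing of that $L$-value. By the Ramanujan bound \eqref{cuspFourierBound} applied to both $h$ and $g_i$, one has $|c(\mathfrak n,h)\overline{c(\mathfrak n,g_i)}|\ll N(\mathfrak n)^{k/2+l-1+\varepsilon}$, so the Euler product of the convolution converges absolutely for $\Re(s)>k/2+l$. When $k\geq4$ we have $k+l-1>k/2+l$, so $s=k+l-1$ lies strictly inside the region of absolute convergence, where the $L$-function is a convergent product of nonvanishing local factors and is therefore automatically nonzero; thus $\langle E_k h,g_i\rangle\neq0$ for every $i$ unconditionally, and $d>1$ gives the contradiction. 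When $k=2$ the point $s=l+1$ lands exactly on the edge $\Re(s)=k/2+l$, where absolute convergence fails and non-vanishing becomes delicate; here I would invoke the grand Riemann hypothesis for the relevant $L$-functions to secure non-vanishing at the edge, which matches the hypothesis in the statement and explains the weight-$2$ restriction.

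The main obstacle is precisely this edge non-vanishing in the case $k=2$, together with the bookkeeping of the several-variable Rankin--Selberg unfolding over a field with infinite unit group: one must confirm that $A(k,l)$ and the Gamma factors neither vanish nor introduce spurious zeros, and that the folded sum is genuinely the Rankin--Selberg $L$-function (up to a Dedekind zeta factor) so that the absolute-convergence non-vanishing applies. A secondary point to isolate is the degenerate possibility that $h$ is twist-related to some $g_i$, which could replace the edge value by a pole of a symmetric-square factor rather than a non-vanishing point; this configuration must be treated separately before concluding that at least two projections survive.
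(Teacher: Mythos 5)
Note first that this paper does not prove Theorem \ref{RC2024} at all: the statement is imported verbatim from \cite{Rankin-Cohen}, so the only comparison possible is with that cited argument, not with anything in the present text. Your proposal --- expand the cusp form $E_k h$ in a Hecke eigenbasis of $S_{k+l}(\Gamma_F)$ (using multiplicity one to force at most one nonzero projection), identify each $\langle E_k h, g_i\rangle$ via unfolding with the Rankin--Selberg convolution of $h$ and $g_i$ at $s=k+l-1$, and obtain nonvanishing of every projection from absolute convergence of the Euler product when $k\geq 4$ and from GRH at the edge point (analytic $\Re(s)=1$) when $k=2$ --- is essentially the Rankin--Selberg projection method on which the cited theorem rests, and it correctly isolates why GRH enters only at $k=2$, matching the remark following the statement. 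Two small points: for $k=2$ the unfolding itself, not just the nonvanishing, is delicate, since the series defining $E_2$ over a real quadratic field is not absolutely convergent and the interchange of sum and integral must be justified by Hecke's trick ($E_2(z,s)$ and analytic continuation); and your flagged worry about $h$ being twist-related to some $g_i$ is vacuous here, because $h$ and $g_i$ have different weights, hence different archimedean types, so the Rankin--Selberg $L$-function is entire and no pole can interfere at the edge.
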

\begin{Rmk}
Note that Theorem \ref{RC2024} requires the grand Riemann hypothesis when $k=2$, explaining the formulation of the second part of Theorem \ref{Thm1}.     
\end{Rmk}

With the dimension formula for  Hilbert cusp form spaces over quadratic fields established in \cite{Thomas}, we obtain the following lemma.
\begin{Lem}\label{DimThomas}
Let \( F=\mathbb Q(\sqrt{d}) \) be a real quadratic field with square-free $d$, discriminant \( D >12\) and narrow class number one. For \( k \geq 2 \),
\[
\dim S_{2k}(\Gamma_F) = 2k(k-1)\cdot\zeta_F(-1) + \chi(\Gamma_F) - h(-3D)\cdot \delta_k/6,
\]
where 
$\delta_k$ is $1$ if $k \equiv 2\text{(mod 3)}$ and $0$ otherwise,
$h(D)$ is the class number of quadratic field with discriminant $D$, 
and $\chi(\Gamma_F)=1+\dim S_2(\Gamma_F)$ is the arithmetic genus. 
\end{Lem}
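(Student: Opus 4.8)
The plan is to take the general dimension formula for parallel-weight Hilbert cusp forms established in \cite{Thomas} and specialize it under the two standing hypotheses, namely narrow class number one and $D>12$. Such a formula, obtained from Hirzebruch--Riemann--Roch applied to the line bundle of weight-$2k$ modular forms on the resolved, compactified Hilbert modular surface $X=\mathbb{H}^2/\Gamma_F$, expresses $\dim S_{2k}(\Gamma_F)$ for $k\geq 2$ as a sum of three kinds of contributions: a principal volume term proportional to $\zeta_F(-1)$, contributions attached to the cusps, and local Riemann--Roch corrections attached to the elliptic fixed points. My first step is to record this formula in the shape $\dim S_{2k}(\Gamma_F)=2k(k-1)\zeta_F(-1)+(\text{term independent of }k)+(\text{weight-dependent elliptic terms})$, where the quadratic principal term arises from the self-intersection of the modular line bundle together with the vanishing of higher cohomology for $k\geq 2$.

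Next I would classify the elliptic fixed points. An elliptic element of $\Gamma_F$ has trace $\zeta+\zeta^{-1}$ for a root of unity $\zeta$, and this trace lies in the real quadratic field $F$; hence $\mathbb{Q}(\zeta+\zeta^{-1})\subseteq F$, which restricts the order in $\mathrm{PSL}_2$ to $\{2,3,4,5,6\}$. The orders $4,5,6$ force $F$ to equal $\mathbb{Q}(\sqrt{2})$, $\mathbb{Q}(\sqrt{5})$, $\mathbb{Q}(\sqrt{3})$, of discriminants $8,5,12$ respectively, so the hypothesis $D>12$ excludes all three and leaves only elliptic points of orders $2$ and $3$. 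This is exactly where $D>12$ is used. The order-$2$ points contribute a correction depending only on the weight modulo $2$; since $2k$ is always even, this is independent of $k$. The order-$3$ points contribute a correction periodic in $2k$ modulo $3$, nonzero precisely when $2k\equiv 1\pmod 3$, that is $k\equiv 2\pmod 3$ (the factor $\delta_k$); counting these points by optimal embeddings of the relevant cube-root-of-unity order gives the class number $h(-3D)$, and the local Dedekind-sum computation produces the coefficient $-1/6$.

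Then I would absorb the weight-independent pieces into the arithmetic genus. Narrow class number one gives exactly one cusp, so the cusp contribution is a single, $k$-independent Hirzebruch--Jung correction. Collecting the constant remainder of the volume term, the single cusp term, and the order-$2$ elliptic term, I claim their sum equals $\chi(\Gamma_F)=1+\dim S_2(\Gamma_F)$. This is justified by applying Hirzebruch's formula for the holomorphic Euler characteristic of $X$ to the same geometric data and using the standard facts $q=h^{1,0}=0$ and $p_g=h^{2,0}=\dim S_2(\Gamma_F)$ for Hilbert modular surfaces, so that $\chi(\mathcal{O}_X)=1-q+p_g=1+\dim S_2(\Gamma_F)$. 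Assembling the three steps then yields the stated identity.

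I expect the main obstacle to lie in the third step: matching the normalizations in Thomas's formula so that the leftover constant, cusp, and order-$2$ terms combine exactly into $\chi(\Gamma_F)$ with no residual weight dependence, together with pinning down the order-$3$ count as precisely $h(-3D)$ --- in particular handling the two local rotation types of order-$3$ points and showing they occur in matching numbers --- and confirming both the coefficient $1/6$ and the residue class $k\equiv 2\pmod 3$ from the local Riemann--Roch contribution.
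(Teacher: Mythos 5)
Your overall skeleton matches the paper's proof: both specialize the Thomas--Vasquez formula \cite[Eq (2.15)]{Thomas}, identify $\delta_k$ (the paper reads it off the generating function $\sum_k \delta_k t^k = t^2/(1-t^3)$ from \cite[Eq (2.8)]{Thomas}), use $D>12$ to kill the exceptional elliptic orders, and quote van der Geer \cite{Geer} for the count of order-$3$ points. Your trace argument for excluding orders $4,5,6$ when $D>12$ is fine, and your identification of $\chi(\Gamma_F)=1+\dim S_2(\Gamma_F)$ is the standard fact (though re-assembling the constant, cusp, and order-$2$ terms into $\chi$ from scratch, as you propose, is exactly the content of Thomas's Eq (2.15), which the paper simply cites rather than re-derives).

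The genuine gap is in your order-$3$ count, and you have flagged it yourself without resolving it: the number of elliptic points of order $3$ equals $h(-3D)$ \emph{only when} $3\nmid D$. When $3\mid D$ the count is a different combination of class numbers (this is precisely the case distinction in the formulas on p.~17 of \cite{Geer}), so your claim ``counting these points by optimal embeddings \ldots gives the class number $h(-3D)$'' is false as a general statement and cannot be pushed through by just ``handling the two local rotation types.'' The missing idea, which is the one genuinely arithmetic step in the paper's proof, is that the hypothesis of narrow class number one forces the fundamental unit to have norm $-1$, hence the Pell equation $x^2-dy^2=-1$ is solvable, hence every odd prime dividing $D$ is $\equiv 1 \pmod 4$; in particular $3\nmid D$. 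Only after this observation does the second formula on p.~17 of \cite{Geer} yield exactly $h(-3D)$ order-$3$ points and hence the term $h(-3D)\cdot\delta_k/6$. Note that $D>12$ alone does not give this (e.g.\ $D=21$ has $3\mid D$); it is the narrow class number one hypothesis, which in your proposal is used only to count cusps, that does the work here.
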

\begin{proof}
The dimension formula follows directly from \cite[Eq (2.15)]{Thomas}. Specifically, \cite[Eq (2.8)]{Thomas} shows that $\sum_{k=0}^{\infty} \delta_k t^k = \frac{t^2}{1 - t^3}$, and hence we get
\[
\sum_{k=0}^{\infty} \delta_k t^k = \frac{t^2}{1 - t^3}
= t^2 \cdot \sum_{n=0}^{\infty} t^{3n} = \sum_{n=0}^{\infty} t^{3n + 2}, \quad |t|<1.
\]
By comparing coefficients of \(t^k\) on both sides, we get that \(\delta_k = 1\) when \(k = 3n + 2\) for some integer \(n \geq 0\) and 
\(\delta_k = 0\) otherwise. 

Since elliptic points of order $5$ do not occur for $D>12$, $a_5(\Gamma)$ in \cite[Eq (2.15)]{Thomas} is zero. Now we consider the contribution of order $3$ elliptic points to the dimension formula. Since the  fundamental unit of $F$ has norm $-1$, implying that the Pell equation $x^2-dy^2=-1$ has solutions. It follows that all odd prime divisors of $D$ are congruent to 1 modulo 4, so $3\nmid D$. Applying the second formula on page 17 of \cite{Geer}, we obtain the desired dimension formula.
\end{proof}

\begin{Cor}\label{Dim}
For $k\geq3$, $\dim S_{2k}(\Gamma_F)>1$ for $D> 12$.
\end{Cor}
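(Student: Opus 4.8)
The plan is to read the dimension off Lemma~\ref{DimThomas} and show that the only genuinely negative contribution, the order-$3$ elliptic term $-h(-3D)\delta_k/6$, is always dominated by the main term $2k(k-1)\zeta_F(-1)$. First I would record three positivity facts: the arithmetic genus satisfies $\chi(\Gamma_F)=1+\dim S_2(\Gamma_F)\geq 1$, the special value $\zeta_F(-1)$ is strictly positive for a real quadratic field, and $\delta_k\in\{0,1\}$. With these in hand the whole argument becomes a comparison of the main term against the elliptic term.

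Then I would split into two cases according to $\delta_k$. When $\delta_k=0$ the formula reads $\dim S_{2k}(\Gamma_F)=2k(k-1)\zeta_F(-1)+\chi(\Gamma_F)$, a sum of a strictly positive term and something $\geq 1$, so $\dim S_{2k}(\Gamma_F)>1$ with no further work. The substantive case is $\delta_k=1$, i.e.\ $k\equiv 2\pmod 3$; here the smallest admissible weight is $k=5$, so in fact $k\geq 5$ and $2k(k-1)\geq 40$. Using $\chi(\Gamma_F)\geq 1$ it then suffices to prove $2k(k-1)\zeta_F(-1)>h(-3D)/6$.

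To close this inequality I would bound both sides against $D$. On one side, the lower bound in \eqref{zeta_bound} with $k=2$ gives $\zeta_F(-1)\geq \frac{72}{\pi^5}\left(\frac{D}{4\pi^2}\right)^{3/2}=\frac{9D^{3/2}}{\pi^8}$, so that $2k(k-1)\zeta_F(-1)\geq \frac{360 D^{3/2}}{\pi^8}$. On the other side I would invoke a standard effective upper bound for the class number of an imaginary quadratic field, of the shape $h(-3D)\leq \frac{\sqrt{3D}}{\pi}\ln(3D)$, after recalling from the proof of Lemma~\ref{DimThomas} that $3\nmid D$ so that the discriminant $-3D$ is well defined. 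The target inequality then reduces to $\frac{\sqrt 3}{\pi}\ln(3D)<\frac{2160}{\pi^8}\,D$, a logarithm-versus-linear comparison in $D$. Since the right-hand side grows linearly while the left grows logarithmically, it is enough to verify it at the left endpoint $D=13$ (the smallest admissible discriminant with $D>12$) and to note that the gap only widens thereafter.

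The main obstacle is precisely the case $\delta_k=1$ at the smallest discriminant: the crude constant $\frac{72}{\pi^5}$ in the lower bound for $\zeta_F(-1)$ is weak, so the comparison is genuinely tight near $D=13$. What rescues it is the arithmetic input that $\delta_k=1$ forces $k\geq 5$, upgrading the factor $2k(k-1)$ from $12$ to at least $40$; with the weaker factor $12$ coming from $k=3$ the inequality would actually fail at $D=13$. I would therefore be careful to carry out the case split on $\delta_k$ before doing any estimation, rather than bounding $2k(k-1)$ by its value at $k=3$, and to cite an explicit class-number bound valid for the discriminant $-3D$.
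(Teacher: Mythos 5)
Your proof is correct, but it closes the critical case by a different mechanism than the paper, which makes no case distinction on $\delta_k$ at all. The paper bounds $\delta_k\le 1$ uniformly, keeps only the factor $2k(k-1)\ge 12$ from $k\ge 3$, and compensates with sharper analytic inputs: the functional equation $\zeta_F(-1)=D^{3/2}(4\pi^4)^{-1}\zeta_F(2)$ together with $\zeta_F(2)>1$ gives $\zeta_F(-1)\ge D^{3/2}/(4\pi^4)$, which is a factor $\pi^4/36\approx 2.7$ stronger than the bound $9D^{3/2}/\pi^8$ you extract from \eqref{zeta_bound}, and it uses Ramar\'e's bound $L(1,\chi)\le\tfrac{1}{2}\log|\Delta|+\tfrac52-\log 6$ rather than your cruder $L(1,\chi)\le\log|\Delta|$. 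With those constants the single inequality already survives the tight point $(k,D)=(3,13)$ (the paper's lower bound evaluates to roughly $1.44+1-0.84>1$), so all $k\ge 3$ are handled at once. Your route instead exploits the arithmetic of $\delta_k$: when $\delta_k=0$ the formula from Lemma \ref{DimThomas} is trivially $>1$, and when $\delta_k=1$ the congruence $k\equiv 2\pmod 3$ combined with $k\ge 3$ forces $k\ge 5$, upgrading $2k(k-1)$ to at least $40$, which provides enough slack for the weaker generic bounds; your observation that $k=3$ would fail under your bounds is accurate (indeed $12\cdot 9\cdot 13^{3/2}/\pi^8\approx 0.53$, which is below $h(-39)/6=2/3$ even using the true class number $h(-39)=4$), so the case split is genuinely indispensable in your version. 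Both arguments rest on the same skeleton --- Lemma \ref{DimThomas}, $\chi(\Gamma_F)\ge 1$, the fact that $3\nmid D$ makes $-3D$ a fundamental discriminant, and monotonicity in $D$ from the smallest admissible discriminant $13$ --- so the trade-off is analytic precision (paper) versus a congruence-driven case analysis (yours); your version has the mild advantage of not needing Ramar\'e's refinement, at the cost of an extra case.
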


\begin{proof}
From the Dirichlet class number formula and \cite[Corollary 1]{Ramare2001L1chi},
we have $$h(\Delta)=\frac{|\Delta|^{1/2}}{\pi}\cdot L(1,\chi)\leq \frac{|\Delta|^{1/2}}{\pi}\cdot\left(\frac{\log (|\Delta|)}{2}+\frac{5}{2}-\log6\right)$$ for fundamental  discriminant $\Delta<-4$, Kronecker symbol $\chi=\left(\frac{\Delta}{\cdot}\right)$ and Dirichlet $L$-series $L(1,\chi)$. By the functional equation  
$\zeta_F(-1) = D^{3/2}\cdot (4\pi^{4})^{-1} \cdot\zeta_F(2)$ and Lemma \ref{DimThomas}, 
\begin{equation}\label{dim-neq}
\begin{aligned}
\dim S_{2k}(\Gamma_F)=&2k(k-1)\cdot\zeta_F(-1) +1+\dim S_2(\Gamma_F)- h(-3D)\cdot \delta_k/6\\
\geq&2k(k-1)\cdot D^{3/2}\cdot (4\pi^4)^{-1}+1-h(-3D)/6\\
\geq&3\cdot  D^{3/2}\cdot(\pi^4)^{-1}+1- \frac{(3D)^{1/2}}{6\pi}\left(\frac{\log (3D)}{2}+\frac{5}{2}-\log6\right)
\end{aligned}
\end{equation}
for $k\geq 3$, since $\zeta_F(2)>1$ and $-3D$ is also a fundamental discriminant.
Note that the right-hand side of \eqref{dim-neq} increases with $D$ and exceeds $1$ for $D > 12$, hence $\dim S_{2k}(\Gamma_F) > 1$.
\end{proof}
We now apply the bounds from \cite{Joshi-Zhang2019Hilbert} to determine all possible triples $(k_1,k_2,D)$. When $(2)$ is inert, the Hecke relation \eqref{primeCoe} for $g=f \cdot h$ yields
\begin{align*}
    &\left|c_0(f)\cdot\frac{c((4),g)}{c_0(f)} -c_0(f)c((4),h)\right|\\
    =&\left|c_0(f)\cdot \left(\left(\frac{c((2),g)}{c_0(f)}\right)^2-4^{k_1+k_2-1}\right)-c_0(f)\cdot \left(c((2),h)^2-4^{k_2-1}\right)\right|\\
    =&\left|c_0(f)^{-1}+ 2c((2),h)+c_0(f)(4^{k_2-1}-4^{k_1+k_2-1})\right| . 
\end{align*}
On the other hand, Lemma 4.3 of \cite{Joshi-Zhang2019Hilbert} implies 
\begin{equation*}
c((4), g)-c_0(f) c( (4), h) =c((3), h) + c(( 3), f) + c(( 2), h)c(( 2), f).
\end{equation*}
Hence, by \eqref{cuspFourierBound}, \eqref{EisensteinCoe}, \eqref{zeta_bound}, Lemma \ref{EisensteinCoeBound}, and \cite[Lemma 3.2]{You-Zhang2021Hilbert}, eigenform product identity $g=f\cdot h$ must satisfy 
\begin{equation}\label{eq:3.4}
\begin{aligned}
&4^{k_2-1}(4^{k_1}-1) \\
\leq&\left(\frac{\pi^5}{18}\right)^{2} \left( \frac{(2\pi)^2}{D} \right)^{2k_1-1} \Gamma(k_1)^{-4} + \frac{\pi^5}{18} \left( \frac{(2\pi)^2}{D} \right)^{k_1-\frac{1}{2}} \Gamma(k_1)^{-2} \cdot 2^{k_2+1}  \\
+& \frac{\pi^5}{18} \left( \frac{(2\pi)^2}{D} \right)^{k_1-\frac{1}{2}} \Gamma(k_1)^{-2} \cdot \left(3^{k_2+3}+9^{k_1+1}+(1+4^{k_1-1})\cdot2^{k_2}\right)\\
\leq& \frac{\pi^5}{6}\left( \frac{(2\pi)^2}{D} \right)^{k_1-\frac{1}{2}} \cdot\Gamma(k_1)^{-2}\cdot \left(3^{k_2+3}+9^{k_1+1}+(1+4^{k_1-1})\cdot2^{k_2}\right).
\end{aligned}
\end{equation}
By removing \(D\), we obtain
\begin{align}\label{eq:3.5}
4^{k_2-1}(4^{k_1}-1)&\leq\frac{\pi^5}{6}\cdot (2\pi)^{2k_1-1}\cdot\Gamma(k_1)^{-2} \cdot \left(3^{k_2+3}+9^{k_1+1}+(1+4^{k_1-1})\cdot2^{k_2}\right).
\end{align}
By dividing both sides of \eqref{eq:3.5} by \(4^ {k_2}\), we have 
\begin{align}\label{eq:3.6}
4^{k_1}-1 
&\leq\frac{2\pi^5}{3}\cdot (2\pi)^{2k_1-1}\cdot\Gamma(k_1)^{-2} \cdot \left(28+9^{k_1+1}+4^{k_1-1}\right).
\end{align}

 If $(2)$ is not inert, subsections 6.1 and 6.2 of \cite{Joshi-Zhang2019Hilbert} imply that $g=f\cdot h$ satisfies 
\begin{align}\label{eq:3.4.1}
2^{k_2-1}(2^{k_1}-1) 
&\leq \frac{\pi^5}{18}\cdot \left( \frac{(2\pi)^2}{D} \right)^{k_1-\frac{1}{2}}\cdot \Gamma(k_1)^{-2},
\end{align}
\begin{equation}\label{eq:3.5.1}
2^{k_2-1}(2^{k_1}-1) 
\leq \frac{\pi^5}{18}\cdot (2\pi)^{2k_1-1}\cdot \Gamma(k_1)^{-2},
\end{equation}
and 
\begin{equation}\label{eq:3.6.1}
2^{k_1}-1 
\leq \frac{\pi^5}{18}\cdot (2\pi)^{2k_1-1}\cdot \Gamma(k_1)^{-2}.
\end{equation}

Now we prove the second part of Theorem \ref{Thm1} according to inert and non-inert cases in ideal (2). 
\begin{Prop}\label{(2)Inert}
Under the grand Riemann hypothesis, no eigenform product identity $g=f\cdot h$ with $c_0(f)\neq0$  and $c_0(h)=0$ exists over all real quadratic fields with narrow class number 1, $D>5$ and $(2)$ inert.
\end{Prop}
\begin{proof}
Assume that $(2)$ is inert, we begin by analyzing the growth of the right-hand side of inequality \eqref{eq:3.6} in $k_1$. Define  $$G(k_1)=\frac{2\pi^5}{3}\cdot (2\pi)^{2k_1-1}\cdot\Gamma(k_1)^{-2} \cdot(28+9^{k_1+1}+4^{k_1-1})$$ and we have
\begin{align*}
\frac{G(k_1)}{G(k_1-1)}=\frac{(2\pi)^{2k_1-1}\cdot\Gamma(k_1)^{-2} \cdot(28+9^{k_1+1}+4^{k_1-1})}{(2\pi)^{2k_1-3}\cdot\Gamma(k_1-1)^{-2} \cdot(28+9^{k_1}+4^{k_1-2})}&<\frac{(2\pi)^2\cdot9 }{(k_1-1)^2}<1 
\end{align*}
for $k_1\geq20$. Hence $G$ decreases over $S = \{20 + 2k \mid k \in \mathbb{N}\}$. 
 By the monotonicity of $G(k_1)$, we find that inequality \eqref{eq:3.6} fails for $k_1 > 28$, establishing $28$ as the maximal $k_1$. As both sides of inequality \eqref{eq:3.5} increase strictly in $k_2$, but at different rate, we determine the maximal $k_2$ for each $k_1$ by iterating over $k_2 = 2, 4, 6, \cdots$.
Similarly, for each fixed pair $(k_1, k_2)$, \eqref{eq:3.4} yields the maximal $D$ (see Table \ref{E-S-(2) inert}). 

By Corollary \ref{Dim}, $\dim S_{k_1 + k_2}(\Gamma_F) > 1$ for all possible triples $(k_1,k_2,D)$ where $k_1+k_2\geq 6$ and $D\geq 13$, and hence  no eigenform product identity exists in this case by Theorem \ref{RC2024} and Table \ref{E-S-(2) inert}.

It remains to consider $k_1 = k_2=2$ and $D\in[13,3517]$. Following the lines of Corollary \ref{Dim}, $\dim S_k(\Gamma_F) > 1$ for $k=4$ and $D \geq 29$, reducing the remaining case to $(k_1, k_2, D) = (2, 2, 13)$. However, \cite{Ishikawa1988dim} shows  no weight-$2$ cuspidal eigenform exists in this case. Therefore, there is no eigenform product identity over all real quadratic fields  with narrow class number 1, $D>5$ and $(2)$ inert.
\end{proof}
\begin{table}[ht]
  \centering
  \caption{The possible $k_1$, $k_2$ and $D$}
  \label{E-S-(2) inert}
  \begin{tabular}{|c|c|c|c|c|c|c|c|c|c|c|c|c|c|c|}
    \hline
    $k_1$ & 2 &4&6&8&10&12&14&16&18&20&22&24&26&28 \\\hline
    \text{Maximal} $k_2$  & 38  &42 &38&26&18&16&16&14&14&12&8&6&4&$\emptyset$\\\hline
    \text{Maximal $D$}&3517&109&37&13&$\emptyset$&$\emptyset$&$\emptyset$&$\emptyset$&$\emptyset$&$\emptyset$&$\emptyset$&$\emptyset$&$\emptyset$&$\emptyset$\\\hline
  \end{tabular}
\end{table}
\begin{Prop}\label{(2)Non-inert}
Under the grand Riemann hypothesis, no eigenform product identity $g=f\cdot h$ with $c_0(f)\neq0$ and $c_0(h)=0$ exists over all real quadratic fields with narrow class number 1, $D>5$ and $(2)$ non-inert. 
\end{Prop}
\begin{proof}
Assume that $(2)$ is not inert, we put $$G(k_1)=\frac{\pi^5}{18}\cdot (2\pi)^{2k_1-1}\cdot\Gamma(k_1)^{-2}$$ and  have 
\begin{align*}
\frac{G(k_1)}{G(k_1-1)}&=\frac{(2\pi)^{2k_1-1}\cdot\Gamma(k_1)^{-2} }{(2\pi)^{2k_1-3}\cdot\Gamma(k_1-1)^{-2} }=\frac{(2\pi)^2 }{(k_1-1)^2}<1 
\end{align*}
when $k_1\geq8$. Hence $G$ strictly decreases in $S = \{8 + 2k \mid k \in \mathbb{N}\}$ and then inequality \eqref{eq:3.6.1} implies that the maximal possible value of $k_1$ is $12$. For each possible $k_1$, we then determine the corresponding maximal  $k_2$ satisfying  inequality \eqref{eq:3.5.1}. Finally, for every fixed pair $(k_1,k_2)$, we compute the maximal possible $D$ satisfying inequality \eqref{eq:3.4.1}, yielding  Table \ref{E-S-(2) not-inert}.

This reduces to $k_1=2 \text{ or }4$ by Table \ref{E-S-(2) not-inert}.
For $k_1=4$, Table \ref{E-S-(2) not-inert} shows that the only possible quadratic field is $F=\mathbb Q(\sqrt 8)$. In this case, Magma computation yields $\dim S_{4+k_2}(\Gamma_F)>1$ for $k_2\in [2,14]$ and it follows that no eigenform product identity exists by Theorem \ref{RC2024}.  For $k_1=2$, only the triple $(k_1,k_2,D)=(2,2,8)$ yields $\dim S_{4}(\Gamma_F)=1$. However, no weight-$2$ cuspidal eigenform exists for $D=8$ by \cite{Ishikawa1988dim}. Thus, no eigenform product identity exists over all real quadratic fields with narrow class number 1, $D>5$ and $(2)$ non-inert.     
\end{proof}
\begin{table}[ht]
  \centering
  \caption{The possible $k_1$, $k_2$ and $D$}
  \label{E-S-(2) not-inert}
  \begin{tabular}{|c|c|c|c|c|c|c|}
    \hline
    \text{$k_1$} &2 &4&6&8&10&12 \\\hline
    \text{Maximal $k_2$}&10&14&14&12&8&2 \\\hline
    \text{Maximal $D$}&73&8&$\emptyset$&$\emptyset$&$\emptyset$&$\emptyset$\\\hline
  \end{tabular}
\end{table}
\begin{Rmk}
The dimensions of the associated cusp form spaces were computed using the online Magma calculator, while the data in Tables \ref{E-S-(2) inert} and \ref{E-S-(2) not-inert} were generated using SageMath.  All corresponding code is available in our GitHub repository~\cite{Code}.
\end{Rmk}
We complete the proof of the second part of Theorem \ref{Thm1}.

\section{On totally real number fields beyond the quadratic case}\label{Section 5}
In this section, we prove Theorem \ref{Thm2} by considering two Eisenstein series of distinct weights. This theorem provides an analogous result for all totally real number fields of degree greater than $2$ and with arbitrary narrow class number. Following our earlier approach, we apply the  bounds for special values of Hecke $L$-series and Odlyzko's bound to demonstrate that these estimates contradict the relations on the constant terms of Eisenstein series derived from Hecke eigenform product identity. 

Let $F$ be a totally real number field of degree $n>2$, and let $f=E_{k_1}(\phi_1,\psi_1)$ and $h=E_{k_2}(\phi_2,\psi_2)$, where $k_1\neq k_2$ and $\phi_i$, $\psi_i$ are  narrow ideal class characters of $F$ (see \cite[\S 2]{Dasgupta2011Hilbert} for details). For a narrow ideal class character $\phi$, we recall the following estimate for Hecke $L$-function $L(s,\phi)$ (see \cite[Eq (2.4)]{You-Zhang2021Hilbert}):
\begin{equation}\label{Lbound}
\begin{aligned}
     \left(\frac{2}{\pi} \right)^{\frac{n}{2}}\left( \frac{D}{\left( 2\pi\right)^{n}}\right)^{k-\frac{1}{2}}\Gamma\left( k\right)^{n}\frac{\zeta\left( n^{2}k\right)}{\zeta\left( k\right)^{n}} &\leq \left| L\left( 1-k,\phi\right) \right|\\
     &\leq  \left(\frac{2}{\pi} \right)^{\frac{n}{2}}\left( \frac{D}{\left( 2\pi\right)^{n}}\right)^{k-\frac{1}{2}}\Gamma\left( k\right)^{n}\zeta\left( k \right)^{n}. 
\end{aligned}
\end{equation}

As in \cite[Eq (2.1) and Eq (2.3)]{You-Zhang2021Hilbert}, $g$ is equal to $E_{k_1+k_2}(\phi,\psi)$ up to a non-zero scalar, and the identity $g=f\cdot h$ implies \begin{equation} \label{eq:Eisensteinconstantterm}
    \frac{1}{L\left( 1-k_{1},\phi_{1}^{-1} \psi_{1}\right)}+ \frac{1}{L\left( 1-k_{2},\phi_{2}^{-1}\psi_{2}\right)}=\frac{1}{L\left( 1-k_{1}-k_{2},\phi^{-1}\psi\right)},
\end{equation} 
where $\phi=\phi_1\phi_2$ and $\psi=\psi_1\psi_2$. Put $A=L\left( 1-k_{1},\phi_{1}^{-1} \psi_{1}\right)$, $B=L\left( 1-k_{2},\phi_{2}^{-1}\psi_{2}\right)$ and $C=L\left( 1-k_{1}-k_{2},\phi^{-1}\psi\right)$.

Assume that $k_1>k_2$. By 
\eqref{Lbound}, we obtain  \begin{align} \label{eq:unequalweightlowerbound1}
    \left| \frac{A}{B} \right| \geq \left( \frac{D}{\left( 2\pi\right)^{n}}\right)^{k_{1}-k_{2}}\frac{\Gamma\left( k_{1}\right)^{n}}{\Gamma\left( k_{2}\right)^{n}}\frac{\zeta\left( n^{2}k_{1}\right)}{\zeta\left( k_{1}\right)^{n}\zeta\left( k_{2}\right)^{n}}\geq\left( \frac{Dk_2^n}{\left( 2\pi\right)^{n}}\right)^{k_{1}-k_{2}}\frac{1}{\zeta(2)^{2n}}.
\end{align} 
Similarly, we obtain that
\begin{align}\label{1inequality}
    1=\left| \left( A+B\right) \frac{C}{AB} \right| = \left| \frac{A}{B}+1 \right| \cdot \left| \frac{C}{A}\right|\geq  C_{n} \left( \frac{D}{\left( 2\pi\right)^{n}}\right)^{k_{2}}\frac{\Gamma\left( k_{1}+k_{2}\right)^{n}}{\Gamma\left( k_{1}\right)^{n}},
\end{align} where $C_{n}=\delta_{n} \cdot \frac{\zeta\left( n^{2}\left(k_{1} +k_{2}\right)\right)}{\zeta\left( k_{1}+k_{2}\right)^{n}\zeta\left( k_{1}\right)^{n}}$ and $\delta_{n}$ arises from estimating $\left|\frac{A}{B}+1\right|$. We shall prove $\delta_n$ can be chosen as a constant  depending only on $n$. To proceed, we recall  Odlyzko's bound from ~\cite{TAKEUCHI83}. 
\begin{Prop}{\cite[Proposition 2.3]{TAKEUCHI83}}\label{Takeuchi's bound}
    We have $$D>a^{n}\exp(-b),$$ where $a=29.099$ and $b=8.3185$.
\end{Prop}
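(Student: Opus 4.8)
The plan is to derive this root-discriminant bound from the Weil--Odlyzko explicit formula for the Dedekind zeta function $\zeta_F(s)$, using crucially that $F$ is totally real so that all $n$ archimedean places are real and contribute identical gamma factors. First I would form the completed zeta function
\[
\Lambda_F(s)=D^{s/2}\bigl(\pi^{-s/2}\Gamma(s/2)\bigr)^{n}\zeta_F(s),
\]
which is meromorphic with simple poles only at $s=0$ and $s=1$ and satisfies the functional equation $\Lambda_F(s)=\Lambda_F(1-s)$. Pairing $-\Lambda_F'/\Lambda_F$ against an even, rapidly decreasing test function $\phi$ with transform $\widehat\phi$, and shifting contours across the critical strip, produces an explicit formula of the schematic shape
\[
\phi(0)\log D=\sum_{\rho}\widehat\phi(\rho)+2\sum_{\mathfrak p}\sum_{m\ge 1}\frac{\log N\mathfrak p}{N\mathfrak p^{\,m/2}}\,\phi(m\log N\mathfrak p)+n\,\kappa(\phi)-R(\phi),
\]
where $\rho$ runs over the nontrivial zeros of $\zeta_F$, the term $n\,\kappa(\phi)$ is the archimedean contribution arising from $n$ copies of $\Gamma_{\R}'/\Gamma_{\R}$, and $R(\phi)$ collects the residual contributions of the poles at $s=0,1$.

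Next I would specialize $\phi$ to a fixed nonnegative even function whose transform is also nonnegative, $\widehat\phi\ge 0$; a compactly supported choice keeps the prime sum finite and, more importantly, controllable. Since $\phi\ge 0$ the prime sum is nonnegative, and since $\widehat\phi\ge 0$ the sum over zeros is nonnegative, so both may be discarded to yield the one-sided inequality
\[
\log D\ge \frac{n\,\kappa(\phi)-R(\phi)}{\phi(0)}.
\]
For the chosen $\phi$ the ratio $\kappa(\phi)/\phi(0)$ is a constant equal to $\log a$, while $R(\phi)/\phi(0)$ is a bounded constant equal to $b$; exponentiating then gives precisely $D\ge a^{n}\exp(-b)$. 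Matching the numerical values $a=29.099$ and $b=8.3185$ thus reduces to evaluating $\kappa(\phi)$ through the digamma function and $R(\phi)$ through the residues of $\Lambda_F$ for the specific admissible $\phi$.

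The main obstacle is the design of the test function $\phi$. One must arrange $\phi\ge 0$ and $\widehat\phi\ge 0$ simultaneously, keep the support small enough that the prime sum is negligible, and at the same time make the archimedean quantity $\kappa(\phi)$ as large as possible so as to maximize $a$; these are exactly the competing constraints that Odlyzko's variational analysis is built to resolve, and extracting clean explicit constants rather than the asymptotically optimal one is what fixes the particular values above. A secondary point requiring care is that the pole term $R(\phi)$ and the trivial-zero contributions must be bounded uniformly in $n$, since their size alone determines the correction $\exp(-b)$ and guarantees the estimate for every degree $n$ rather than only in the limit. Once a workable $\phi$ is fixed, the remaining evaluation of $\kappa(\phi)$ and $R(\phi)$ is routine, so I expect the positivity-constrained optimization of $\phi$ to be the crux of the argument.
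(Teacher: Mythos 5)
First, a point of comparison: the paper does not prove this proposition at all. It is quoted verbatim from Takeuchi \cite[Proposition 2.3]{TAKEUCHI83}, who in turn lifts it from Odlyzko's discriminant tables; the pair $(a,b)=(29.099,\,8.3185)$ is one entry of those tables. So there is no internal argument to match your proposal against — what you have attempted is a reconstruction of Odlyzko's theorem itself, which is far more than the paper undertakes.

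As such a reconstruction, your outline correctly names the machinery (Weil--Odlyzko explicit formula for $\zeta_F$, a test function with two-sided positivity, discarding the prime and zero sums, reading $\log a$ off the archimedean term and $b$ off the pole term), but it has two genuine gaps. (1) The entire quantitative content of the statement is the specific pair $(a,b)$, and your argument never produces it: you explicitly defer the construction of $\phi$ and the evaluation of $\kappa(\phi)$ and $R(\phi)$, which is exactly where all of the work lies. Different admissible test functions yield different pairs $(a,b)$ — Odlyzko's tables contain many such entries — so nothing in the sketch singles out $29.099$ and $8.3185$; without exhibiting a concrete $\phi$ and carrying out the numerical evaluation, the proposition is not proved. (2) Your positivity step for the zero sum is justified only for zeros on the critical line. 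The bound claimed here is unconditional, and $\zeta_F$ may, for all we can prove, have zeros $\rho=\beta+i\gamma$ with $\beta\neq\tfrac12$; for an even test function their paired contribution is $\int F(x)\cosh\left(\left(\beta-\tfrac12\right)x\right)\cos(\gamma x)\,dx$, and the hypothesis $\widehat\phi\ge 0$ (which handles only $\beta=\tfrac12$) does not make this nonnegative. Odlyzko and Poitou impose the stronger requirement that the cosine transform of $F(x)\cosh(cx)$ be nonnegative for all $0\le c\le\tfrac12$, which constrains the admissible functions much more severely and is part of the variational problem you identify as the crux. As written, your argument yields at best a GRH-conditional bound, whereas the proposition, as used in Section \ref{Section 5}, must hold unconditionally.
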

\begin{Lem}\label{deltaN}
    We can take $\delta_n = 
\begin{cases} 
0.2 & \text{if } n = 3, \\
0.03 & \text{if } n = 4, \\
1 & \text{if } n > 4.
\end{cases}$  
\end{Lem}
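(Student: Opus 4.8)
The goal is to produce a constant lower bound $|A/B+1|\ge \delta_n$, this being exactly the factor $\delta_n$ appearing in \eqref{1inequality}. Since the narrow class characters $\phi_i^{-1}\psi_i$ need not be real, $A/B$ is a genuine complex number, so I would rely on the reverse triangle inequality $|A/B+1|\ge \bigl|\,|A/B|-1\,\bigr|$; it then suffices to keep the modulus $|A/B|$ bounded away from $1$ by a definite amount. For this I would bracket $|A/B|$ between the lower bound $L$ already recorded in \eqref{eq:unequalweightlowerbound1} and the matching upper bound $U$ obtained by dividing the upper estimate for $|A|$ by the lower estimate for $|B|$ in \eqref{Lbound}. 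When $L>1$ this gives $|A/B+1|\ge L-1$, and when $U<1$ it gives $|A/B+1|\ge 1-U$.

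First I would dispose of the generic regime $n>4$. There the factor $(D/(2\pi)^n)^{k_1-k_2}$ together with the super-exponential ratio $\Gamma(k_1)^n/\Gamma(k_2)^n$ makes $L$ large already at the smallest admissible weights. Concretely, I would substitute Odlyzko's lower bound $D>a^n\exp(-b)$ from Proposition \ref{Takeuchi's bound} into the expression for $L$, reduce to the extremal weight pair by a monotonicity check in $k_1$ and $k_2$, and then verify the single numerical inequality $L\ge 2$ for every $n>4$. This yields $|A/B|\ge 2$, hence $|A/B+1|\ge 1$, so one may take $\delta_n=1$ when $n>4$.

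The work concentrates in the degrees $n=3$ and $n=4$, where Odlyzko's bound is too weak against the Gamma factors to force $L>1$ at the smallest weights, so the two regimes $L>1$ and $U<1$ leave a bounded exceptional window of triples $(k_1,k_2,D)$ in which $|A/B|$ might lie near $1$. I would close this window with sharper discriminant data: for $n=3$ the smallest totally real cubic discriminant already exceeds the top of the window, so the lower-bound regime applies with room to spare and $\delta_3=0.2$ is admissible; for $n=4$ only the single pair $(k_1,k_2)=(3,2)$ survives, over the finitely many fields whose discriminant lies below the threshold, and I would dispatch these by directly evaluating $A=L(1-k_1,\phi_1^{-1}\psi_1)$ and $B=L(1-k_2,\phi_2^{-1}\psi_2)$ and reading off the explicit constant $\delta_4=0.03$.

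The main obstacle is precisely this regime $|A/B|\approx 1$ for $n=3,4$: the purely analytic magnitude estimates \eqref{Lbound} and \eqref{eq:unequalweightlowerbound1} cannot by themselves separate $A/B$ from $-1$ once the annulus $L\le|A/B|\le U$ straddles $1$, and one cannot escape this by invoking the exact identity $A/B+1=A/C$, since that would render the eventual contradiction drawn from \eqref{1inequality} circular. It is this regime that forces both the recourse to precise small-degree discriminant information and the small, seemingly ad hoc values of $\delta_3$ and $\delta_4$; for the downstream application only the positivity of $\delta_n$ matters, not its exact size.
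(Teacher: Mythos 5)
Your overall framework coincides with the paper's: the reverse triangle inequality $|A/B+1|\ge\bigl||A/B|-1\bigr|$, the bracketing of $|A/B|$ via \eqref{Lbound} and \eqref{eq:unequalweightlowerbound1}, Odlyzko's bound (Proposition \ref{Takeuchi's bound}) for large degree, and the minimal discriminants from \cite[Table 3]{Voight} for small degree. Two execution caveats, though. First, your claim that Odlyzko alone yields $L\ge 2$ for every $n>4$ holds only if $L$ is the sharper (first) lower bound in \eqref{eq:unequalweightlowerbound1}: with the simplified bound $\left(Dk_2^n/(2\pi)^n\right)^{k_1-k_2}\zeta(2)^{-2n}$ one gets only about $1.9$ at $n=5$, $(k_1,k_2)=(4,2)$, which is exactly why the paper treats $n=5$ separately using $D\ge 14641$. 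Second, the same remark applies at $n=3$: at $(k_1,k_2)=(4,2)$, $D=49$, the simplified bound is about $0.13$, and only the sharper bound (whose minimum over even weight pairs is about $1.49$, attained at $(4,2)$) puts every pair in the regime $L>1$; granting that, your conclusion $\delta_3=0.2$ is correct with room to spare, and in this respect your explicit two-regime formulation is actually more careful than the paper's own $n=3$ computation, which measures the distance from $1$ of the simplified bound at $(8,2)$.

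The genuine gap is your $n=4$ case, and it stems from an inconsistency about weight parity. Everywhere else you minimize over even weight pairs with extremal pair $(4,2)$, as the paper does; at $n=4$ you suddenly admit the odd pair $(k_1,k_2)=(3,2)$ as the surviving exceptional case. You cannot have it both ways. If odd weights are excluded --- the convention the paper plainly works under, since all of its extremal pairs are $(4,2)$ --- then nothing survives at $n=4$: for $D\ge 725$ even the simplified bound at $(4,2)$ equals $\left(725\cdot2^4/(2\pi)^4\right)^2\zeta(2)^{-8}\approx1.0334>1$ and increases at every other even pair, so $\delta_4=0.03\le0.0334$ falls out of the same analytic estimate as all other cases; this is precisely how the paper obtains the constant, with no special $L$-values computed. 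If instead odd weights are admitted, your other two cases collapse: for $n=3$, $D=49$, $(3,2)$ the annulus is roughly $[0.2,\,12]$ and straddles $1$, so ``the lower-bound regime applies with room to spare'' fails, and for $n=5$ even the true minimal discriminant $14641$ gives only $|A/B|\gtrsim1.6$ at $(3,2)$, so $\delta_5=1$ is unreachable. Finally, your proposed repair --- numerically evaluating $A$ and $B$ over the finitely many quartic fields below the threshold, for all pairs of narrow class characters, and ``reading off'' $\delta_4=0.03$ --- is not a proof plan: nothing guarantees in advance that such a computation returns a minimum of at least $0.03$, and it is in any case not where the paper's constant comes from.
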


\begin{proof}
We first consider $n>4$ and it suffices to prove $\left| \frac{A}{B} \right| \geq 2$. In this case, by \eqref{eq:unequalweightlowerbound1} and Proposition \ref{Takeuchi's bound}, we find that for each fixed $n>4$, 
\begin{align}\label{AB}
    \left| \frac{A}{B} \right| \geq \frac{\left(\left(\frac{ak_2}{2\pi}\right)^{n}\exp(-b)\right)^{k_1-k_2}}{ \zeta(2)^{2n}}\geq\left(\frac{6a}{\pi^3}\right)^{2n}\cdot \exp(-2b),
    \end{align}
     since $\left(\left(\frac{ak_2}{2\pi}\right)^{n}\exp(-b)\right)^{k_1-k_2}$ is minimal when $k_{1}=4$ and $k_{2}=2$. As the function $$\left(\frac{6a}{\pi^3}\right)^{2n}\cdot \exp(-2b)$$ increases with $n$, it follows that  $\left| \frac{A}{B} \right| \geq 2$, and hence $\left| \frac{A}{B}+1 \right| \geq 1$ for all $n\geq 6$. For $n=5$, substituting $D\geq14641$ from \cite[Table 3]{Voight} into the stronger bound of \eqref{eq:unequalweightlowerbound1} also yields  $\left| \frac{A}{B}+1 \right| \geq 1$. 
    
    For $n=3,4$, we use the minimal discriminants $49$ and $725$   respectively from \cite[Table 3]{Voight} to  bound $\left| \frac{A}{B}+1 \right|$. For $n=3$, the expression $$\left(\frac{49\cdot k_2^n}{(2\pi)^n}\right)^{k_1-k_2} \zeta(2)^{-2n} \approx 0.786299$$ is closest to $1$ when $k_{2}=2$ and $k_1=8$, yielding  $\left| \frac{A}{B}+1 \right| \geq 0.2$. For $n=4$, the expression  $$\left(\frac{725\cdot k_2^n}{(2\pi)^n}\right)^{k_1-k_2} \zeta(2)^{-2n} \approx 1.033449$$ is closest to $1$ when $k_{2}=2$ and  $k_{1}=4$, giving  $\left| \frac{A}{B}+1 \right| \geq 0.03$.
\end{proof}

Now we verify whether \eqref{1inequality} holds by Proposition \ref{Takeuchi's bound}  and Lemma \ref{deltaN}  to prove Theorem \ref{Thm2}. 
\begin{Prop}\label{section4Nonequal}
No eigenform product identity $g=E_{k_{1}}\left( \phi_{1},\psi_{1}\right)\cdot E_{k_{2}}\left( \phi_{2},\psi_{2}\right)$ with $k_{1} > k_{2}$  exists over all totally real number fields  of degree $n > 2$.
\end{Prop}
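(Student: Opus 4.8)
The plan is to show that inequality \eqref{1inequality} fails for all admissible triples $(k_1,k_2,D)$ with $k_1>k_2\geq 2$ and $n>2$, which contradicts the necessary condition for an eigenform product identity. Since \eqref{1inequality} must hold for any genuine identity, exhibiting its failure universally establishes nonexistence. The right-hand side of \eqref{1inequality} is $C_n\left(\tfrac{D}{(2\pi)^n}\right)^{k_2}\tfrac{\Gamma(k_1+k_2)^n}{\Gamma(k_1)^n}$, so I would prove this quantity always exceeds $1$, forcing a contradiction with the equality $1=\left|(A+B)\tfrac{C}{AB}\right|$.

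First I would bound the two discriminant-independent factors from below. For the Gamma-quotient, using $\Gamma(k_1+k_2)/\Gamma(k_1)\geq k_1^{k_2}\geq (k_2+2)^{k_2}$ (since $k_1\geq k_2+2$ as both weights are even), so $\tfrac{\Gamma(k_1+k_2)^n}{\Gamma(k_1)^n}\geq (k_2+2)^{nk_2}$. For the constant $C_n=\delta_n\cdot\tfrac{\zeta(n^2(k_1+k_2))}{\zeta(k_1+k_2)^n\zeta(k_1)^n}$, I would use $1<\zeta(m)\leq\zeta(2)=\pi^2/6$ to get $C_n\geq\delta_n\zeta(2)^{-2n}=\delta_n(6/\pi^2)^{2n}$, with $\delta_n$ supplied by Lemma \ref{deltaN}. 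Then I would invoke Proposition \ref{Takeuchi's bound} to replace $D$ by the Odlyzko lower bound $a^n\exp(-b)$, so that
\begin{align*}
C_n\left(\frac{D}{(2\pi)^n}\right)^{k_2}\frac{\Gamma(k_1+k_2)^n}{\Gamma(k_1)^n}
&\geq \delta_n\left(\frac{6}{\pi^2}\right)^{2n}\left(\frac{a^n\exp(-b)}{(2\pi)^n}\right)^{k_2}(k_2+2)^{nk_2}\\
&\geq \delta_n\left(\frac{6}{\pi^2}\right)^{2n}\left(\left(\frac{a(k_2+2)}{2\pi}\right)^n\exp(-b)\right)^{k_2}.
\end{align*}
The inner base $\left(\tfrac{a(k_2+2)}{2\pi}\right)^n\exp(-b)$ is minimized at $k_2=2$, so it suffices to verify the resulting expression exceeds $1$ at $k_2=2$ for each of the cases $n=3$, $n=4$, and $n\geq 5$ separately, using the respective value of $\delta_n$.

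The main obstacle is the small-degree regime $n=3,4$, where both $\delta_n$ is tiny ($0.2$ and $0.03$) and the Odlyzko bound is weakest; a crude uniform estimate may be too lossy to clear $1$. For these low degrees I would instead substitute the \emph{actual} minimal discriminants ($49$ for $n=3$, $725$ for $n=4$ from \cite[Table 3]{Voight}) rather than the Odlyzko value, exactly as in the proof of Lemma \ref{deltaN}, and check the $k_2=2$ base case by direct numerical evaluation, confirming that the lower bound for the right-hand side of \eqref{1inequality} exceeds $1$. Because increasing either $D$, $k_1$, or $k_2$ only enlarges the right-hand side (the Gamma-quotient and discriminant power both grow), verifying the minimal configuration $k_2=2$, $k_1=4$, $D=D_{\min}(n)$ in each degree suffices to conclude that \eqref{1inequality} fails throughout, contradicting the existence of any such product identity.
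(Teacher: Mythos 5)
Your proposal is correct and follows essentially the same strategy as the paper's proof: contradict \eqref{1inequality} by bounding its right-hand side below, splitting into the cases $n=3$, $n=4$ (substituting the actual minimal discriminants $49$ and $725$ from Voight's table) and $n>4$ (using the Takeuchi/Odlyzko bound), combined with the Gamma-quotient estimate and monotonicity in $k_2$ and $n$. The only real deviation is your cruder constant $C_n \geq \delta_n\,\zeta(2)^{-2n}$ in place of the paper's sharper bounds via $\zeta(4)$ and $\zeta(6)$; this still clears the threshold (e.g. roughly $1.61>1$ at $(n,k_1,k_2,D)=(3,4,2,49)$, versus the paper's margin of about $22.37$), so the argument goes through, just more tightly.
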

\begin{proof}
To complete the proof, it suffices to prove \eqref{1inequality} does not hold. We first consider $n=3$. Since the term $$ \frac{\zeta\left( 9\left(k_{1} +k_{2}\right)\right)}{\zeta\left( k_{1}+k_{2}\right)^{3}\zeta\left( k_{1}\right)^{3}}\geq \frac{1}{\zeta\left( 6\right)^{3}\cdot\zeta\left( 4\right)^{3}} \geq 0.74,$$ Lemma \ref{deltaN} gives $C_{3} \geq 0.14$. Since  $k_{1}\geq4$, we have  
$$C_{3} \left( \frac{D}{\left( 2\pi\right)^{3}}\right)^{k_{2}}\frac{\Gamma\left( k_{1}+k_{2}\right)^{3}}{\Gamma\left( k_{1}\right)^{3}}\geq C_3\left( \frac{Dk_{1}^{3}}{\left( 2\pi\right)^{3}}\right)^{k_{2}}\geq 0.14\cdot \left(\frac{49\cdot4^3}{\left( 2\pi\right)^{3}}\right)^2> 22.37,$$
 which contradicts \eqref{1inequality}.

Then we consider $n=4$ and we see that $C_{4}\geq \delta_{4} \cdot\frac{1}{\zeta\left( 6\right)^{4}\zeta\left( 4\right)^{4}} \geq 0.02$, while the term $$\left( \frac{D}{\left( 2\pi\right)^{4}}\right)^{k_{2}}\frac{\Gamma\left( k_{1}+k_{2}\right)^{4}}{\Gamma\left( k_{1}\right)^{4}}\geq \left( \frac{725\cdot 4^{4}}{\left( 2\pi\right)^{4}}\right)^{2} \geq 14181,$$ so \eqref{1inequality} does not hold for $n=4$.

It remains to consider $n>4$. 
For each fixed $n>4$, note that $$\left(\left(\frac{a\cdot k_1}{2\pi}\right)^{n}\exp(-b)\right)^{k_2}$$ increases with respect to $k_1$ and $k_2$, and  $$\left(\left(\frac{2a}{\pi}\right)\cdot \frac{90}{\pi^{4}}\right)^{2n}\cdot \exp(-2b)$$ increases with respect to $n$. Hence the expression  $$\left(\frac{90}{\pi^4}\right)^{2n}\left(\left(\frac{a\cdot k_1}{2\pi}\right)^n\cdot \exp(-b)\right)^{k_2}$$ achieves its minimum  at $(n,k_1,k_2)=(5,4,2)$, that is  
\begin{align*}
    C_{n} \left( \frac{D}{\left( 2\pi\right)^{n}}\right)^{k_{2}}\frac{\Gamma\left( k_{1}+k_{2}\right)^{n}}{\Gamma\left( k_{1}\right)^{n}}
    &\geq \left(\frac{180a}{\pi^5}\right)^{2n}\cdot \exp(-2b)>128426,
\end{align*}
contradicting \eqref{1inequality} for all $n > 4$.  This completes the proof.
\end{proof}
By Proposition \ref{section4Nonequal}, we finish the proof of Theorem \ref{Thm2}.

\bibliographystyle{amsplain}

\end{document}